\newcommand{\Be}{\begin{equation}}
\newcommand{\Ee}{\end{equation}}
\newcommand{\Bea}{\begin{eqnarray}}
\newcommand{\Eea}{\end{eqnarray}}
\newcommand{\Beas}{\begin{eqnarray*}}
\newcommand{\Eeas}{\end{eqnarray*}}
\newcommand{\Benu}{\begin{enumerate}}
\newcommand{\Eenu}{\end{enumerate}}
\newcommand{\Bi}{\begin{itemize}}
\newcommand{\Ei}{\end{itemize}}
\def\intslash{\rlap{\kern  .32em $\mspace {.5mu}\backslash$ }\int}
\def\qsl{{\rlap{\kern  .32em $\mspace {.5mu}\backslash$ }\int_{Q_x}}}
\def\emph#1{{\it #1 }}
\def\dist{{\text{\it dist}}}
\def\supp{{\text{\rm supp}}}
\def\inn#1#2{\langle#1,#2\rangle}
\def\card{\text{\rm card}}
\def\lc{\lesssim}
\def\ep{\epsilon}
             \def\La{\Lambda}
\def\fM{{\mathfrak {M}}}
\def\bbE{{\mathbb {E}}}
\def\bbN{{\mathbb {N}}}
\def\bbR{{\mathbb {R}}}
\def\bbZ{{\mathbb {Z}}}
\def\sD{{\mathscr {D}}}
\def\sH{{\mathscr {H}}}
\def\cA{{\mathcal {A}}}
\def\cS{{\mathcal {S}}}
\def\cU{{\mathcal {U}}}
\def\tf{{\widetilde f}}
\def\be#1{\begin{equation}\label{ #1}}
\def\endeq{\end{equation}}
\def\endal{\end{align}}
\def\bas{\begin{align*}}
\def\eas{\end{align*}}
\def\bi{\begin{itemize}}
\def\ei{\end{itemize}}
\def\emph#1{{\it #1}}
\def\textbf#1{{\bf #1}}
\def\bbone{{\mathbbm 1}}
\theoremstyle{plain}
  \newtheorem{theorem}{Theorem}[section]
   \newtheorem{proposition}[theorem]{Proposition}
   \newtheorem{lemma}[theorem]{Lemma}
   \newtheorem{corollary}[theorem]{Corollary}
\theoremstyle{remark}
\theoremstyle{definition}
\newcommand {\SE} {{\mathbb E}}
\newcommand {\SN} {{\mathbb N}}
\newcommand {\SR} {{\mathbb R}}
\newcommand {\SNz} {{\mathbb \SN_0}}
\newcommand {\SRd} {{\mathbb R^d}}
\newcommand {\SZd} {{\mathbb Z^d}}
\newcommand {\e} {{\varepsilon}}
\newcommand{\bfe}{{\boldsymbol\e}}
\newcommand{\ENp}{{\bbE_N^\perp}}
\newcommand {\mand} {{\quad\mbox{and}\quad}}
\renewcommand {\mid} {{\,\,\,\colon\,\,\,}}
\def\supp{\mathop{\rm supp}}
\def\dist{\mathop{\rm dist}}
\newcounter{reg}
\newcommand{\sline}{{\smallskip

\noindent}}
\def\Xint#1{\mathchoice
{\XXint\displaystyle\textstyle{#1}}%
{\XXint\textstyle\scriptstyle{#1}}%
{\XXint\scriptstyle\scriptscriptstyle{#1}}%
{\XXint\scriptscriptstyle\scriptscriptstyle{#1}}%
\!\int}
\def\XXint#1#2#3{{\setbox0=\hbox{$#1{#2#3}{\int}$ }
\vcenter{\hbox{$#2#3$ }}\kern-.6\wd0}}
\def\mint{\Xint-}
\begin{document}

\title
[The Haar system  as a Schauder basis]
{The Haar system as a Schauder basis in spaces of Hardy-Sobolev type}

\author[G. Garrig\'os \ \ \ A. Seeger \ \ \ T. Ullrich] {Gustavo Garrig\'os   \ \ \ \   Andreas Seeger \ \ \ \ 
Tino Ullrich}

\address{Gustavo Garrig\'os\\ 
Department of Mathematics\\
University of Murcia\\
30100 Espinardo\\Murcia, 
Spain
} 
\email{gustavo.garrigos@um.es}

\address{Andreas Seeger \\ Department of Mathematics \\ University of Wisconsin \\480 Lincoln Drive\\ Madison, WI,
53706, USA} \email{seeger@math.wisc.edu}

\address{Tino Ullrich\\
Hausdorff Center for Mathematics\\ Endenicher Allee 62\\
53115 Bonn, Germany} \email{tino.ullrich@hcm.uni-bonn.de}
\begin{abstract} We show that, for suitable enumerations, the Haar system is a Schauder basis in the classical Sobolev spaces in $\SR^d$ with integrability $1<p<\infty$ and 
smoothness $1/p-1<s<1/p$. This complements earlier work by the last two authors on the unconditionality of the Haar system and implies  that it is a {conditional} Schauder basis for a nonempty open subset  of  the $(1/p,s)$-diagram.  
The results extend to (quasi-)Banach spaces of Hardy-Sobolev and Triebel-Lizorkin type in the  range of parameters $\frac{d}{d+1}<p<\infty$ and $\max\{d(1/p-1),1/p-1\}<s<\min\{1,1/p\}$,  which is optimal except perhaps at the end-points. 
\end{abstract}
\subjclass[2010]{46E35, 46B15, 42C40}
\keywords{Schauder basis, Unconditional bases, Haar system,  Sobolev space, Triebel-Lizorkin space}

\thanks{G.G. was supported in part   by grants  MTM2013-40945-P, MTM2014-57838-C2-1-P, MTM2016-76566-P
from MINECO (Spain), and grant 19368/PI/14  from Fundaci\'on S\'eneca (Regi\'on
de Murcia, Spain). A.S. was supported in part by NSF grant 
DMS 1500162. T.U. was supported 
the DFG  Emmy-Noether program UL403/1-1}

\maketitle



\section{Introduction}

We recall the definition of the (inhomogeneous) Haar system in $\SRd$.
Consider the 1-variable functions
\[
h^{(0)}=\bbone_{[0,1)}\mand h^{(1)}=\bbone_{[0,1/2)}-\;\bbone_{[1/2,1)}.
\]For every $\bfe=(\e_1,\ldots,\e_d)\in\{0,1\}^d$ one defines
\[
h^{(\bfe)}(x_1,\ldots,x_d)\,=\,h^{(\e_1)}(x_1)\cdots h^{(\e_d)}(x_d).
\]
Finally, one sets
\[
h^{(\bfe)}_{k,\ell}(x)= h^{(\bfe)}(2^kx-\ell),\quad  k\in\SNz,\;\ell\in\SZd,
\]
Denoting $\Upsilon=\{0,1\}^d\setminus\{\vec 0\}$, the Haar system is then given by
\[
\sH_d=\Big\{h^{(\vec 0)}_{0,\ell}\Big\}_{\ell\in\SZd}\cup \Big\{h^{(\bfe)}_{k,\ell}\mid k\in\SNz,\;\ell\in\SZd,\;\bfe\in\Upsilon\Big\}.
\]
Observe that $\supp\;h^{(\bfe)}_{k,\ell}$ is the dyadic cube $I_{k,\ell}:=2^{-k}(\ell+[0,1]^d)$.

In this paper we consider basis properties of $\sH_d$ in  
Besov spaces 
$B^s_{p,q}$, and Triebel-Lizorkin spaces
$F^s_{p,q}$ in $\SRd$. We refer to \cite{albiac-kalton} for terminology and general facts about bases in Banach spaces. 
 
In the 1970's, Triebel \cite{triebel73,triebel78} proved that the Haar system $\sH_d$ is a   Schauder
 basis on $B^s_{p,q}(\SRd)$ if 
\begin{equation}\label{large_range}
 \tfrac{d}{d+1}<p<\infty,\quad0<q<\infty,\quad 
\max\big\{d(\tfrac1p-1),\tfrac1p-1\big\} <s< \min\big\{1,\tfrac1p\big\},
\end{equation}
 and this range is maximal, except perhaps at the endpoints.
 Moreover, the basis is unconditional when  \eqref{large_range} holds; see \cite[Theorem 2.21]{triebel-bases}.
Concerning $F^s_{p,q}$ spaces, however, in \cite{triebel-bases} it is only shown  that $\sH_d$ is an unconditional basis for $F^s_{p,q}(\SRd)$ when, besides \eqref{large_range}, the additional assumption 
\Be \max\big\{d(\tfrac1q-1),\tfrac1q-1\big\}<s<\tfrac1q\label{dqs}\Ee is satisfied.  Recently, two of the authors showed in
\cite{su,sudet} that the additional restriction \eqref{dqs} is in fact  necessary,  at least when $d=1$.  It was left open 
whether suitable enumerations of the Haar system
can form  a Schauder basis in $F^s_{p,q}$ in  the larger range \eqref{large_range}.  We shall answer this question affirmatively.
 
Given an  enumeration $\{u_1,u_2,\ldots\}$ of the system $\sH_d$,
we let $P_N$  be the orthogonal projection onto the subspace spanned by $u_1,\dots, u_N$, i.e. 
\Be \label{PNdef} P_N f= \sum_{n=1}^N \|u_n\|_2^{-2} \inn{f}{u_n}u_n \,.
\Ee

The sequence $\{u_n\}_{n=1}^\infty$ is a Schauder basis on $F^s_{p,q}$ if 
\Be \label{limit} \lim_{N\to\infty} \|P_Nf-f\|_{F^s_{p,q}} =0,\quad \mbox{for all }f\in F^s_{p,q}.\Ee
In view of the uniform boundedness principle, density theorems and the result for Besov spaces,  \eqref{limit} follows if we can show that the operators $P_N$ have 
uniform $F^s_{p,q}\to F^s_{p,q}$ operator norms. Note, that the condition $s<1/p$ is necessary since the Haar functions need to belong to 
$F^s_{p,q}$. By duality, if  $1<p<\infty$, the condition $s>1/p-1$ becomes also necessary, 
so the range in \eqref{large_range} is optimal in this case.  If $p\leq 1$, 
then an interpolation argument shows that \eqref{large_range} is also a maximal range, except perhaps at the end-points; see $\S\ref{optimal}$ below. 

\smallskip 

\noindent{\bf Definition.}
An enumeration $\cU=\{u_1, u_2, ...\}$ of the Haar system $\sH_d$ is {\it admissible}
if the following condition holds for each cube $I_{\nu} = \nu+[0,1]^d, \nu\in \bbZ^d$.
If $u_n$ and $u_{n'}$ are both supported in $I_\nu$ and $|\supp(u_n)|>|\supp(u_{n'})|$, then necessarily $n<n'$\,.

\begin{figure}[h]
    \centering
    $\begin{tabular}{c|ccccccc}
    $k \backslash I_\nu$ & $I_{\nu_0}$ & $I_{\nu_1}$ & $I_{\nu_2}$ & $I_{\nu_3}$ & $I_{\nu_4}$ & \ldots \\
    \hline
        0&1&2&4&7&11\\
        1&3&5&8&12  \\
            2&6&9&13\\
            3&10&14\\
            4&15
    \end{tabular}$
    \caption{An admissible enumeration of $\sH_d$.}
\end{figure}

\noindent The table above shows how to obtain an admissible (natural) enumeration of $\sH_d$ via a diagonalization of the intervals $I_{\nu}$ versus the levels 
$k$. 
We first label the set $\SZd=\{\nu_1,\nu_2,\ldots\}$. 
Then, we follow the order indicated by the table, where being at position $(\nu_i,k)$ means to pick all the Haar functions with support contained in $I_{\nu_i}$ and size $2^{-kd}$, arbitrarily enumerated,
before going to the subsequent table entry. 

Our main result reads as follows. 

\begin{theorem} \label{schauder}
Let  
$\cU=\{u_n\}_{n=1}^\infty$ be an admissible enumeration of the Haar system $\sH_d$. 
Assume that

(i) $\frac{d}{d+1}<p<\infty$, 

(ii) $0<q <\infty$, 

(iii) $\max\{d(\frac 1p-1),\frac 1p-1\} <s< \min\{1,\frac 1p\}$. 

Then $\cU$ is a Schauder basis on $F^s_{p,q}(\SRd)$.
\end{theorem}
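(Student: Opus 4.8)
The plan is to reduce the Schauder basis property to a uniform bound
\[
\sup_{N}\ \|P_N\|_{F^s_{p,q}\to F^s_{p,q}}\ <\ \infty,
\]
exactly as indicated after \eqref{limit}: density of (say) Schwartz functions in $F^s_{p,q}$ in the stated range, together with the uniform boundedness principle, then upgrades the uniform bound to convergence \eqref{limit} for all $f$. So the entire task is the uniform operator norm estimate for the partial-sum projections $P_N$ associated with an admissible enumeration.

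The key structural observation is that admissibility lets one decompose $P_N$ into pieces that are either \emph{full} dyadic Haar projections or localized \emph{single-scale} corrections. Concretely, fix $N$ and let $\cU_N=\{u_1,\dots,u_N\}$. For each unit cube $I_\nu$, admissibility forces the Haar functions of $\cU_N$ supported in $I_\nu$ to be exactly those of support-size $\ge 2^{-k_\nu d}$ for some threshold $k_\nu=k_\nu(N)\in\SNz\cup\{-1\}$ (with the $k=0$ generation $h^{(\vec0)}_{0,\ell}$ included once $\nu$ is "switched on"), \emph{except} possibly at the single "active" cube $\nu_{i(N)}$ and level $k_{\nu}$ where the enumeration is mid-block — there only a subset of the Haar functions $h^{(\bfe)}_{k_\nu,\ell}$ with $I_{k_\nu,\ell}\subset I_{\nu_{i(N)}}$, $\bfe\in\Upsilon$, is present. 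Thus one writes
\[
P_N \;=\; \sum_{\nu}\mathbb E_{k_\nu}\,\mathbbm 1_{I_\nu}\;+\;R_N,
\]
where $\mathbb E_{j}$ is the dyadic martingale averaging (conditional expectation) projection onto scale-$2^{-j}$ cubes — i.e. the sum of the Haar projections of generations $0,1,\dots,j$ — and $R_N$ is a single remainder term of the form $\sum_{\ell\in S}\sum_{\bfe\in\Upsilon'} \|h^{(\bfe)}_{m,\ell}\|_2^{-2}\inn{f}{h^{(\bfe)}_{m,\ell}}h^{(\bfe)}_{m,\ell}$ living at one fixed scale $m=k_{\nu_{i(N)}}$ inside one fixed unit cube. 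The first sum telescopes/fits together into a genuine averaging operator: away from the boundaries of the unit cubes it is locally equal to a single $\mathbb E_{k}$ on each dyadic cube of the partition, and the $\mathbbm 1_{I_\nu}$ truncations only "see" the unit-cube grid. Hence the whole argument rests on two ingredients: (a) uniform $F^s_{p,q}$-boundedness of the dyadic averaging operators $\mathbb E_j$, uniformly in $j$, together with their localization to unit cubes; and (b) uniform $F^s_{p,q}$-boundedness of a single-scale partial Haar projection $R_N$, uniformly in the scale $m$, the cube, and the subset $S\times\Upsilon'$ chosen.

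For (a) one uses that $\mathbb E_j$ is, up to harmless error, a convolution-type smoothing at scale $2^{-j}$ followed by a restriction, and the boundedness of martingale averages on $F^s_{p,q}$ in the range $\max\{d(1/p-1),1/p-1\}<s<\min\{1,1/p\}$ is precisely the kind of estimate that underlies Triebel's Besov result and the Haar-unconditionality theory; I would either invoke it directly from the Besov/Triebel–Lizorkin literature cited (\cite{triebel-bases}) or prove it via the Littlewood–Paley/atomic characterization of $F^s_{p,q}$, checking that $s<1/p$ makes characteristic-function truncations by $\mathbbm 1_{I_\nu}$ act boundedly (pointwise multiplier $\mathbbm 1_{I_\nu}\in F^s_{p,q}$ multiplication lemma, valid for $s<1/p$) and $s>d(1/p-1)$ (equivalently $s>1/p-1$ together with $p>d/(d+1)$) gives the low-smoothness/quasi-Banach control. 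For (b), the single-scale term $R_N$ is morally a Haar \emph{frame} operator at one scale localized to one cube; by rescaling one reduces to scale $1$ inside $[0,1]^d$, and then the finite-rank operator $f\mapsto \sum_{\bfe\in\Upsilon'}\inn{f}{h^{(\bfe)}}h^{(\bfe)}$ (plus the averaging part of $\mathbb E_0$) is handled again by the multiplication-by-$\mathbbm 1$ lemma and the local atomic estimate, with a bound independent of which $\Upsilon'\subset\Upsilon$ and which unit cube one is in, because $|\Upsilon|=2^d-1$ is fixed. Summing over $\nu$ is legitimate because the supports $I_\nu$ have bounded overlap (in fact are disjoint) and $F^s_{p,q}$ has an $\ell^p$-type (or $\ell^{\min(p,q,1)}$-type in the quasi-Banach case) localization with respect to a unit-scale cube decomposition.

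The main obstacle I anticipate is ingredient (b) combined with the quasi-Banach regime $p<1$ (and also $q<1$): one must get the single-scale projection bound \emph{without} any extra restriction like \eqref{dqs}, which is exactly where the analogous unconditionality statement fails. The point is that for Schauder (conditional) basis purposes we only need to control \emph{one} monotone partial sum at a time — a single "staircase" configuration — rather than an arbitrary sign pattern or arbitrary sub-collection; so the sum-over-scales that destroys the estimate in the unconditional setting is replaced here by a single averaging operator $\sum_\nu \mathbb E_{k_\nu}\mathbbm 1_{I_\nu}$ whose scales are organized by the admissible order, plus one isolated single-scale block. Making this dichotomy precise — i.e. proving that admissibility really does force $\cU_N$ to be (full averaging on all unit cubes) $+$ (one single-scale block), and then proving the single-scale block estimate with the sharp range of $s$ and with constants uniform in the scale — is the technical heart; everything else is standard $F^s_{p,q}$ machinery (atomic decompositions, the pointwise multiplier lemma for $\mathbbm 1_{I}$ when $s<1/p$, and quasi-triangle/localization inequalities). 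I would also double-check the boundary cases $s=d(1/p-1)$ type endpoints are correctly excluded and that the argument is uniform as $q\to\infty$ is \emph{not} needed since $q<\infty$ is assumed, which is what makes Schwartz density available.
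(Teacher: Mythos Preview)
Your overall architecture --- reduce to a uniform bound on $P_N$, localize to unit cubes via the pointwise-multiplier property of $\bbone_{I_\nu}$, and decompose $P_N$ on each cube into a full averaging $\bbE_{k_\nu}$ plus a single-scale Haar block --- is exactly the skeleton of the paper's argument in \S\ref{schaudersect}. Two points, however, need correction.

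First, a minor structural slip: for a \emph{general} admissible enumeration there is no reason only one cube is ``mid-block''. Admissibility is a per-cube condition; it says nothing about the relative order across different $I_\nu$, nor about the order among Haar functions of the \emph{same} support size inside one $I_\nu$. So at step $N$, every $\nu$ (with $k_\nu\ge 0$) may carry its own partial level, and the correct decomposition is
\[
P_N[f\chi_\nu]=\bbE_{N_\nu}[f\chi_\nu]+T_{N_\nu}[f\chi_\nu,a^{N,\nu}]
\]
with a $T_{N_\nu}$-term for \emph{each} $\nu$, not a single global remainder $R_N$. This is not fatal --- your ingredient (b) simply has to be applied cube by cube and then summed in $\ell^p$ via the localization lemma --- but the picture with one active cube only describes the specific diagonal enumeration in Figure~1, not the general case.

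Second, and this is the genuine gap: you treat ingredients (a) and (b) --- uniform $F^s_{p,q}$-boundedness of $\bbE_j$ and of the single-scale operators $T_N[\cdot,a]$ in the full range \eqref{large_range} \emph{without} the $q$-restriction \eqref{dqs} --- as something to ``invoke directly from the literature'' or obtain by ``standard $F^s_{p,q}$ machinery''. They are not available there: the results in \cite{triebel-bases} for $F^s_{p,q}$ carry precisely the extra hypothesis \eqref{dqs}, and a direct atomic/Littlewood--Paley attack on $\bbE_j$ in $F^s_{p,q}$ runs into the same $q$-dependence. These bounds are the new content of the paper (Theorem~\ref{expthm} and Corollary~\ref{uniformbdcor}). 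The mechanism that removes the $q$-restriction is not the ``single staircase vs.\ arbitrary signs'' heuristic you propose, but a detour through Besov spaces: one proves the \emph{improving} estimate
\[
\|\bbE_N f-\varPi_N f\|_{B^s_{p,r}}+\|T_N[f,a]\|_{B^s_{p,r}}\ \lesssim\ \|f\|_{B^s_{p,\infty}}
\]
uniformly in $N$, for \emph{every} $0<r\le\infty$, via explicit kernel estimates for $L_k\bbE_N L_j$ and $L_k\bbE_N^\perp L_j$ (Proposition~\ref{LkENLjprop}). Then the chain of embeddings $F^s_{p,q}\hookrightarrow B^s_{p,\infty}$ and $B^s_{p,r}\hookrightarrow F^s_{p,q}$ (for $r\le\min\{p,q\}$) transfers this to $F^s_{p,q}$ with no condition on $q$ at all. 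Your rescaling idea for (b) does not give uniformity in the scale, since $F^s_{p,q}$ norms are not dilation-invariant; the Besov route is what makes the constants uniform. In short, the reduction you outline is correct, but the operator bounds you plan to quote are the theorem itself.
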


\begin{figure}[h]
 \centering
\subfigure
{\begin{tikzpicture}[scale=2]
\draw[->] (-0.1,0.0) -- (2.1,0.0) node[right] {$\frac{1}{p}$};
\draw[->] (0.0,-1.1) -- (0.0,1.1) node[above] {$s$};

\draw (1.0,0.03) -- (1.0,-0.03) node [below] {$1$};
\draw (2.0,0.03) -- (2.0,-0.03) node [below] {$2$};
\draw (1.5,0.03) -- (1.5,-0.03) node [below] {$\frac{3}{2}$};
\draw (0.03,1.0) -- (-0.03,1.00) node [left] {$1$};
\draw (0.03,.5) -- (-0.03,.5) node [left] {$\frac{1}{2}$};
\draw (0.03,-.5) -- (-0.03,-.5) node [left] {$-\frac{1}{2}$};
\draw (0.03,-1.0) -- (-0.03,-1.00) node [left] {$-1$};

\draw[dashed] (1.0,0.0) -- (1.0,1.0);
\draw[fill=black!70, opacity=0.4] (0.0,-.5) -- (0.0,0.0) -- (.5,.5) -- (1.5,0.5)
-- (1.0,0.0) -- (.5,-.5) -- (0.0,-.5);
\draw (0.0,-1.0) -- (0.0,0.0) -- (1.0,1.0) -- (2.0,1.0) -- (1.0,0.0) --
(0.0,-1.0);
\end{tikzpicture}
}
\subfigure
{\begin{tikzpicture}[scale=2]
\draw[->] (-0.1,0.0) -- (2.1,0.0) node[right] {$\frac{1}{p}$};
\draw[->] (0.0,-1.1) -- (0.0,1.1) node[above] {$s$};

\draw (1.0,0.03) -- (1.0,-0.03) node [below] {$1$};
\draw (1.5,0.03) -- (1.5,-0.03) node [below] {{\tiny$\;\;\frac{d+1}{d}$}};
\draw (1.25,0.03) -- (1.25,-0.03) node [below] {{\tiny $\!\frac{2d+1}{2d}$}};
\draw (0.03,1.0) -- (-0.03,1.00) node [left] {$1$};
\draw (0.03,.5) -- (-0.03,.5) node [left] {$\frac{1}{2}$};
\draw (0.03,-.5) -- (-0.03,-.5) node [left] {$-\frac{1}{2}$};
\draw (0.03,-1.0) -- (-0.03,-1.00) node [left] {$-1$};

\draw[dashed] (1.0,0.0) -- (1.0,1.0);
\draw[fill=black!70, opacity=0.4] (0.0,-.5) -- (0.0,0.0) -- (.5,.5) -- (1.25,0.5)
-- (1.0,0.0) -- (.5,-.5) -- (0.0,-.5);
\draw (0.0,-1.0) -- (0.0,0.0) -- (1.0,1.0) -- (1.5,1.0) -- (1.0,0.0) --
(0.0,-1.0);
\end{tikzpicture}
}
\caption{Unconditionality of the Haar system in Hardy-Sobolev spaces in $\SR$ and $\SRd$}\label{fig2}
\end{figure}

In the left part of Figure \ref{fig2}, the trapezoid is the parameter domain for which the Haar system is a 
Schauder basis in 
 the Hardy-Sobolev space $H^s_p(\mathbb{R})$ ($= F^s_{p,2} (\bbR)$) while the 
shaded part represents the parameter domain for which the Haar system is an unconditional basis in $H^s_p(\mathbb{R})$. The right 
figure shows the respective parameter domain for $H^s_p(\mathbb{R}^d)$.

The heart of the matter is a boundedness result for the dyadic averaging operators $\bbE_N$ given by  
\Be
\label{condexp}
\bbE_N f(x)= \sum_{\mu\in \bbZ^d} \bbone_{I_{N,\mu}} (x) \, 2^N \int_{I_{N,\mu}} f(t) dt\, 
\Ee
with $$I_{N,\mu}=2^{-N}(\mu+[0,1)^d),\quad \mu\in\SZd,\;N=0,1,2,\ldots$$ 
Note that $\bbE_N f$  is just the conditional expectation of $f$ 
with respect to the $\sigma$-algebra generated by the set $\sD_N$ of all dyadic cubes of length $2^{-N}$. 
There is  a well known 
 relation 
between the Haar system 
and the dyadic averaging
 operators, namely for 
$N=0,1,2,\dots$,
\Be\label{martdiff} 
\bbE_{N+1} f-\bbE_Nf
= \sum_{\bfe\in\Upsilon}\sum_{\mu\in \bbZ^d} 2^{Nd} \inn{f}{ h^{(\bfe)}_{N,\mu} }h^{(\bfe)}_{N,\mu},
\Ee
i.e.  $\bbE_{N+1} -\bbE_N $ is the orthogonal projection onto the space generated by the Haar functions with Haar frequency $2^N$.

Now let $\eta_0$ be a Schwartz function on $\bbR^d$, supported in
$\{|\xi|<3/8\} $ and so that
$\eta_0(\xi)=1$ for $|\xi|\leq 1/4$.
Let $\varPi_N$ be defined by
\Be\label{defofvarPi}\widehat{ \varPi_N f}(\xi)= \eta_0(2^{-N}\xi)\widehat f(\xi). \Ee
There is  a 
basic standard 
inequality (almost immediate from the definition of Triebel-Lizorkin spaces)
\Be\label{Pinineq}\sup_N\|\varPi_N f\|_{F^s_{p,q}}\le C(p,q,s)\|f\|_{F^s_{p,q}}\Ee
which is valid for all $s\in \bbR$  and for $0<p<\infty$, $0<q\le \infty$. Moreover, 
\eqref{Pinineq} and the fact that
$\|\varPi_Ng-g\|_{F^s_{p,q}}\to 0$ for Schwartz functions $g$ 
gives 
\Be\label{approfid}\lim_{N\to\infty}  \|\varPi_Nf-f\|_{F^s_{p,q}}
= 0\Ee if $f\in F^s_{p,q}$ and $0<p,q<\infty$.
The main tool in proving Theorem \ref{schauder} is a similar bound for the operators $\bbE_N$ which of course follows from the corresponding bound for $\bbE_N-\varPi_N$. It turns out that
the operators $\bbE_N-\varPi_N$ enjoy better mapping properties in Besov spaces. 
 
Similar bounds are also satisfied by projection operators into sets of Haar functions with fixed Haar frequency. 
Namely,  for  $N\in\bbN$  and functions
$a\in \ell^\infty(\bbZ^d\times\Upsilon)$, we define
\Be \label{TNdef}T_{N}[f,a]
=
\sum_{\bfe\in\Upsilon}\sum_{\mu\in \bbZ^d} a_{\mu,\bfe} 2^{Nd}\inn{f}{h^{(\bfe)}_{N,\mu}} h^{(\bfe)}_{N,\mu}.
\Ee
Observe that the choice $a_{\mu,\bfe}\equiv 1$
recovers the operator $\bbE_{N+1}-\bbE_N$. Then, we shall prove the following.

\begin{theorem}\label{expthm} Let ${d}/{(d+1)}<p\le\infty$, $0<r\leq\infty$, and 
\Be\max\{d(1/p-1),1/p-1\} <s< \min\{1,1/p\}.\label{large2}\Ee
Then there is  a constant $C:=C(p,r,s)>0$ such that for
 all $f\in B^s_{p,\infty}$
\Be \label{Besovbd}
\sup_N    \|\bbE_Nf -\varPi_Nf \|_{B^s_{p,r}} \leq C\|f\|_{B^s_{p,\infty}}.
\Ee
Moreover,
\Be \label{TNbd}\sup_N \| T_N[f,a]\|_{B^s_{p,r}}\lc   \|a\|_\infty \|f\|_{B^s_{p,\infty}}\,. \Ee

\end{theorem}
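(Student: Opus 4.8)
The plan is to prove both \eqref{Besovbd} and \eqref{TNbd} by reducing them to estimates on a single dyadic frequency block of the input. Fix a Littlewood--Paley partition of unity $\{\Lambda_j\}_{j\ge 0}$ (with $\widehat{\Lambda_j f}$ supported where $|\xi|\sim 2^j$ for $j\ge 1$ and where $|\xi|\lesssim 1$ for $j=0$), so that $\|g\|_{B^s_{p,r}}\sim\big\|(2^{js}\|\Lambda_j g\|_p)_{j\ge 0}\big\|_{\ell^r}$ uniformly for all $s\in\SR$, $0<p<\infty$, $0<r\le\infty$. Write $f=\sum_k\Lambda_k f$, so that $\|\Lambda_k f\|_p\le 2^{-ks}\|f\|_{B^s_{p,\infty}}$; it then suffices to estimate $\|\Lambda_j(\bbE_N-\varPi_N)\Lambda_k f\|_p$ and $\|\Lambda_j T_N[\Lambda_k f,a]\|_p$ for all triples $(j,k,N)$ and to reassemble. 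The smooth piece is harmless: by \eqref{Pinineq} and the support of $\eta_0$ one has $\Lambda_j\varPi_N\Lambda_k=\Lambda_j\Lambda_k$ when $2^k\ll 2^N$ and $\Lambda_j\varPi_N=0$ when $j\ge N$, so $\varPi_N$ only interferes with the blocks $k\lesssim N$ and never manufactures output frequency $\gtrsim 2^N$.

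Everything then reduces to four elementary facts about the non-convolution operator $\bbE_N$ (and its frequency-localized cousins $T_N$): \emph{(smoothing error)} for $k\le N$, $\|(\bbE_N-I)\Lambda_k f\|_p\lesssim 2^{-(N-k)}\|\Lambda_k f\|_p$, since $\bbE_N$ averages out the oscillation, on scale $2^{-N}$, of a function essentially constant on that scale; \emph{(cancellation)} $\bbE_N$ preserves the mean of a function over each cube of $\sD_N$, hence $(\bbE_N-I)g$, $T_N[g,a]$ and $\bbE_{N+1}g-\bbE_N g$ all have mean zero over every such cube, which yields the low-frequency gain $\|\Lambda_j(\bbE_N-I)g\|_p\lesssim 2^{-(N-j)}\|g\|_p$ for $j\le N$ (with an extra $\|a\|_\infty$ in the $T_N$ version); \emph{(step functions)} $\bbE_N g$ is constant on the cubes of $\sD_N$ and $T_N[g,a]$ on cubes of side $2^{-N-1}$, and for such a step function $G$ one has the high-frequency decay $\|\Lambda_j G\|_p\lesssim 2^{-(j-N)/p}\|G\|_p$ for $j\ge N$, a reflection of the fact that $\bbone_{I_{N,\mu}}\in B^{1/p}_{p,\infty}$; \emph{(damping of high frequencies)} for $k>N$, $\|\bbE_N\Lambda_k f\|_p\lesssim 2^{-\vth(k-N)}\|\Lambda_k f\|_p$ with $\vth=(1-1/p)_+$ when $p\ge 1$, the rate $1-1/p$ being governed by the $L^{p'}$-size of $\Lambda_k\bbone_{I_{N,\mu}}$, which concentrates near the $(d-1)$-dimensional boundary $\partial I_{N,\mu}$; it is this rate that forces $s>1/p-1$. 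Granting these, for $0<s<\min\{1,1/p\}$ (and $s>d(1/p-1)$ when $p<1$) one writes $\bbE_N-\varPi_N=(\bbE_N-I)\varPi_N+\bbE_N(I-\varPi_N)$ and sums: the smoothing error over $k\le N$ gives $\|(\bbE_N-I)\varPi_N f\|_p\lesssim 2^{-Ns}\|f\|_{B^s_{p,\infty}}$ (using $s<1$), and the damping over $k>N$ (or just $L^p$-boundedness of $\bbE_N$ on band-limited functions, when $s>0$) gives $\|\bbE_N(I-\varPi_N)f\|_p\lesssim 2^{-Ns}\|f\|_{B^s_{p,\infty}}$; hence $\|(\bbE_N-\varPi_N)f\|_p\lesssim 2^{-Ns}\|f\|_{B^s_{p,\infty}}$. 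The step-function bound distributes this mass over output frequencies $j\ge N$ with geometric decay, and — needed only to reach $s=0$ — cancellation together with damping does the same for $j<N$; the resulting $2^{js}$-weighted $\ell^r_j$-norm is a convergent geometric series (convergence using precisely $s<1/p$ at the high end and $s>-1$, resp. $s>0$, at the low end) bounded by $C(p,r,s)\,2^{Ns}\cdot 2^{-Ns}\|f\|_{B^s_{p,\infty}}$, with $C$ uniform in $r$. This gives \eqref{Besovbd} for $0<s<\min\{1,1/p\}$.

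The bound \eqref{TNbd} is proved in the same way, and is in fact cleaner: $T_N[f,a]$ is literally a step function on the cubes of side $2^{-N-1}$, and since each $h^{(\bfe)}_{N,\mu}$ with $\bfe\in\Upsilon$ has vanishing integral, $2^{Nd}|\langle f,h^{(\bfe)}_{N,\mu}\rangle|$ is at most $\|a\|_\infty$ times the $L^1$-oscillation of $f$ over $I_{N,\mu}$; the same summations then give $\|T_N[f,a]\|_p\lesssim\|a\|_\infty 2^{-Ns}\|f\|_{B^s_{p,\infty}}$ and, via the cancellation and step-function facts, $\|T_N[f,a]\|_{B^s_{p,r}}\lesssim 2^{Ns}\|T_N[f,a]\|_p\lesssim\|a\|_\infty\|f\|_{B^s_{p,\infty}}$. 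Finally, the remaining parameters $\max\{d(1/p-1),1/p-1\}<s\le 0$ force $1<p<\infty$, hence $p'\in(1,\infty)$ and $-s\in[0,1/p')$. Since $\bbE_N-\varPi_N$ is self-adjoint and $T_N[\cdot,a]^*=T_N[\cdot,\bar a]$, and $(B^s_{p,\infty})^*=B^{-s}_{p',1}$, $(B^s_{p,r})^*=B^{-s}_{p',r'}$, the bounds \eqref{Besovbd} and \eqref{TNbd} for $(p,s)$ follow, via duality and the embedding $B^{-s}_{p',r'}\hookrightarrow B^{-s}_{p',\infty}$, from the same bounds for $(p',-s)$, which lie in the already-treated range $0\le -s<\min\{1,1/p'\}$ — the endpoint $-s=0$, i.e. $s=0$, being exactly where the sharp damping rate $1-1/p$ and the cancellation property are indispensable. (One must also note that $\bbE_N f$ is well defined for $f\in B^s_{p,\infty}$ in this range, because then $h^{(\bfe)}_{N,\mu}$ lies in the dual space; this is the other role of the hypothesis $s>\max\{d(1/p-1),1/p-1\}$.)

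The principal obstacle is the \emph{damping} estimate: proving $\|\bbE_N\Lambda_k f\|_p\lesssim 2^{-(1-1/p)(k-N)}\|\Lambda_k f\|_p$ for $k>N$ with the \emph{sharp} exponent. When $p\ge 1$ this comes from writing $\langle\Lambda_k f,\bbone_{I_{N,\mu}}\rangle=\langle f,\Lambda_k\bbone_{I_{N,\mu}}\rangle$, using that $\Lambda_k\bbone_{I_{N,\mu}}$ lives in a $2^{-k}$-neighbourhood of $\partial I_{N,\mu}$ with $\|\Lambda_k\bbone_{I_{N,\mu}}\|_{p'}\lesssim(2^{-k}2^{-N(d-1)})^{1/p'}$, the bounded overlap of these neighbourhoods, and Hölder's inequality; when $p<1$, Hölder must be replaced by Nikolskii- and maximal-function-type inequalities for band-limited functions (restricted to hyperplanes), which is what pins down the threshold $d(1/p-1)$. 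A secondary subtlety is the careful combination, in the regime $j<N<k$, of the two independent gains coming from cancellation and from damping, which is what removes an otherwise logarithmic-in-$N$ loss at $s=0$; beyond that the bookkeeping is routine, provided one keeps every summation in $j$ and $k$ geometric so that all constants stay independent of the fine index $r$.
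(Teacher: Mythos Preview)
There is a genuine gap in the passage ``the step-function bound distributes this mass over output frequencies $j\ge N$''. You have correctly argued that $\|(\bbE_N-\varPi_N)f\|_p\lesssim 2^{-Ns}\|f\|_{B^s_{p,\infty}}$, but your step-function fact, as stated, bounds $\|\Lambda_jG\|_p$ by $2^{-(j-N)/p}\|G\|_p$ only when $G$ is a step function on $\sD_N$. The difference $(\bbE_N-\varPi_N)f$ is not a step function; for $j>N$ one has $\Lambda_j(\bbE_N-\varPi_N)f=\Lambda_j\bbE_Nf$, and while $\bbE_Nf$ \emph{is} a step function, its $L^p$ norm is only $\lesssim\|f\|_{B^s_{p,\infty}}$, not $2^{-Ns}\|f\|_{B^s_{p,\infty}}$. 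Feeding this into your step-function bound and summing gives $\big(\sum_{j>N}2^{jsr}\|\Lambda_j\bbE_Nf\|_p^r\big)^{1/r}\lesssim 2^{Ns}\|f\|_{B^s_{p,\infty}}$, which blows up. The offending piece is $(\bbE_N-I)\varPi_Nf$: for input frequency $k\le N$ and output $j>N$ your four facts yield at best $\|\Lambda_j\bbE_N\Lambda_kf\|_p\lesssim 2^{-(j-N)/p}\|\Lambda_kf\|_p$, whereas one needs the \emph{product} bound $\|\Lambda_j\bbE_N\Lambda_kf\|_p\lesssim 2^{-(j-N)/p}2^{-(N-k)}\|\Lambda_kf\|_p$. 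That combined estimate---exactly the entry $B(j,k,N)=2^{(N-k)/p}2^{j-N}$ of Proposition~\ref{LkENLjprop}---does not follow by concatenating the smoothing and step-function facts; it requires knowing that the jumps of $\bbE_N\Lambda_kf$ are themselves of size $2^{k-N}$ (not merely that $\|(\bbE_N-I)\Lambda_kf\|_p$ is small), and then localizing $\Lambda_j$ to a $2^{-j}$-neighborhood of the jump set. The paper carries this out directly with compactly supported local means, which is why all four block bounds $B(j,k,N)$ are proved in one proposition rather than factored through an intermediate $L^p$ estimate. Your $T_N$ argument is in better shape, since $T_N[f,a]$ genuinely is a step function with $\|T_N[f,a]\|_p\lesssim 2^{-Ns}\|f\|_{B^s_{p,\infty}}$, so there the two-step reasoning is legitimate.

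A secondary problem is the duality step for $\max\{d(1/p-1),1/p-1\}<s\le 0$. Dualizing the already-proved bound $\bbE_N-\varPi_N:B^{-s}_{p',\infty}\to B^{-s}_{p',\rho}$ gives $B^s_{p,\infty}\to B^s_{p,\rho'}$ only for $1<\rho'\le\infty$, because $(B^{-s}_{p',\infty})^*$ is not $B^s_{p,1}$; and since the embeddings $B^s_{p,r_1}\hookrightarrow B^s_{p,r_2}$ run from small $r$ to large $r$, this does not recover $0<r\le 1$. The paper sidesteps both issues by proving the four block estimates of Proposition~\ref{LkENLjprop} directly for all $p$ and $s$ in \eqref{large2}, and then summing \eqref{CjkN}; no duality is used, and all $0<r\le\infty$ are obtained at once.
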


We have  the embedding $F^s_{p,q}\subset F^s_{p,\infty}\subset B^s_{p,\infty}$ which we use on the function side. For $r\le p$  we have the embedding  $B^s_{p,r}\subset F^s_{p,r}$ (by Minkowski's inequality in $L^{p/r}$) and if also $r<q$ we have $F^s_{p,r}\subset F^s_{p,q}$; these two are used  for  $\bbE_N f-\varPi_N f$, or $T_N[f,a]$. In particular  we conclude from Theorem \ref{expthm}  that $\bbE_N-\varPi_N$ is bounded on $F^s_{p,q}$, uniformly in $N$.  Hence
\begin{corollary}\label{uniformbdcor}
 Let $p,s$ be as in \eqref{large2} and  $0<q\leq\infty$. Then 
\Be \label{uniformbd}
\sup_N    \|\bbE_Nf \|_{F^s_{p,q}}+ \sup_N\sup_{\|a\|_{\ell^\infty}\le 1} \|T_N[f,a]\|_{F^s_{p,q}} \lc \|f\|_{F^s_{p,q}}.
\Ee
\end{corollary}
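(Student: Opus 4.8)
The plan is to deduce Corollary~\ref{uniformbdcor} from Theorem~\ref{expthm} by a soft argument that combines two standard embeddings, one applied to the input function and one to the output. Fix $p,s$ as in \eqref{large2} and $0<q\le\infty$, and pick an auxiliary exponent $r$ with $0<r\le\min\{p,q\}$ (say $r=\min\{p,q\}$ when $q<\infty$ and $r=p$ when $q=\infty$). On the function side I would use the chain $F^s_{p,q}\subset F^s_{p,\infty}\subset B^s_{p,\infty}$, where the first embedding is the trivial nesting in the fine index and the second is the elementary inequality $\sup_j 2^{js}\|\Delta_jf\|_p\le\big\|\sup_j 2^{js}|\Delta_jf|\big\|_p$. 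On the output side I would use $B^s_{p,r}\subset F^s_{p,r}\subset F^s_{p,q}$, the first being Minkowski's inequality in $L^{p/r}$ (legitimate since $p/r\ge1$, i.e.\ since $r\le p$) and the second being the nesting in the fine index (valid since $r\le q$).

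For the first sum in \eqref{uniformbd} I split $\bbE_Nf=\varPi_Nf+(\bbE_Nf-\varPi_Nf)$. The term $\varPi_Nf$ is controlled uniformly in $N$ directly by \eqref{Pinineq}. For the difference I would chain
\[
\|\bbE_Nf-\varPi_Nf\|_{F^s_{p,q}}\ \lesssim\ \|\bbE_Nf-\varPi_Nf\|_{B^s_{p,r}}\ \lesssim\ \|f\|_{B^s_{p,\infty}}\ \lesssim\ \|f\|_{F^s_{p,q}},
\]
where the first and last inequalities are the embeddings just described and the middle one is \eqref{Besovbd} of Theorem~\ref{expthm}; taking the supremum in $N$ gives $\sup_N\|\bbE_Nf\|_{F^s_{p,q}}\lesssim\|f\|_{F^s_{p,q}}$. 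For the second sum the argument is verbatim the same with \eqref{TNbd} in place of \eqref{Besovbd}: for every $a$ with $\|a\|_{\ell^\infty}\le1$ and every $N$,
\[
\|T_N[f,a]\|_{F^s_{p,q}}\ \lesssim\ \|T_N[f,a]\|_{B^s_{p,r}}\ \lesssim\ \|a\|_\infty\|f\|_{B^s_{p,\infty}}\ \lesssim\ \|f\|_{F^s_{p,q}},
\]
and one takes the supremum over $N$ and over $\|a\|_{\ell^\infty}\le1$, then adds the two estimates.

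There is no real obstacle here: the entire content of the corollary sits inside Theorem~\ref{expthm}, and the above is merely a repackaging. The only two points that deserve a word are: (i) the auxiliary $r$ must satisfy \emph{both} $r\le p$ (to make $B^s_{p,r}\hookrightarrow F^s_{p,r}$ available) and $r\le q$ (to make $F^s_{p,r}\hookrightarrow F^s_{p,q}$ available), which is exactly why we take $r\le\min\{p,q\}$; and (ii) $\bbE_Nf$ must be well defined for $f\in F^s_{p,q}$, which holds because the hypothesis $s>d(1/p-1)$ forces $F^s_{p,q}\subset L^1_{\loc}$, so that the dyadic averages in \eqref{condexp} make sense. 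No convergence statement is used for \eqref{uniformbd}; it is only the uniform boundedness of $\bbE_N-\varPi_N$ and of the $T_N[\,\cdot\,,a]$ on Besov spaces that is being transferred.
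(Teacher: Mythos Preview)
Your proposal is correct and follows essentially the same argument as the paper: the paper explicitly records, in the paragraph preceding the corollary, the chain $F^s_{p,q}\subset F^s_{p,\infty}\subset B^s_{p,\infty}$ on the input side and $B^s_{p,r}\subset F^s_{p,r}\subset F^s_{p,q}$ (for $r\le\min\{p,q\}$) on the output side, together with the splitting $\bbE_N=\varPi_N+(\bbE_N-\varPi_N)$ and \eqref{Pinineq}. Your remarks on the choice of $r$ and on the local integrability of $f$ are appropriate side comments, but the core argument is identical to the paper's.
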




The proofs in this paper use basic principles in the theory of function spaces, such as $L^p$  inequalities for the Peetre maximal functions. 
A different approach to Corollary \ref{uniformbdcor}
via wavelet theory is  presented  in  the subsequent paper \cite{gsu-wavelet}. 
The main arguments and the proof of Theorem \ref{expthm} are contained in
\S \ref{expthmsect}.  In \S\ref{schaudersect} we show how  estimates in the proof of Theorem \ref{expthm} are  used to deduce Theorem \ref{schauder}.  Finally, in \S\ref{optimal} we discuss the optimality of the results.

\section{Proof of Theorem \ref{expthm}}\label{expthmsect}

We start with some preliminaries on convolution kernels which are used  in Littlewood-Paley type decompositions.
Let $\beta_0, \beta $ be Schwartz functions on $\SRd$, compactly supported in $(-1/2,1/2)^d$ such that
 $|\widehat{\beta}_0(\xi)|>0$ when  $|\xi|\leq1$ and 
 $|\widehat{\beta}(\xi)|>0$ when $1/8\leq|\xi|\leq1$. Moreover assume $\beta$ has  vanishing
moments up to a large order 
\Be\label {Mlowbound}M> \frac dp +|s|,
\Ee
 that is,
\Be\label{betacanc}
    \int_{\SRd} \beta(x)\,x_1^{m_1}\cdots x_d^{m_d}\,dx = 0\quad\mbox{when}\quad m_1+\ldots+m_d < M\,.
\Ee
For $k=1,2,\dots$ let  $\beta_k:=2^{kd}\beta(2^k\cdot)$ and $L_k f=\beta_k*f$.   
We shall use the inequality 
\Be\label{localmeans} 
    \|g\|_{B^s_{p,r}}\lc  \Big(\sum_{k=0}^\infty 2^{ksr}\|L_k g\|_p^r\Big)^{1/r}
    \Ee
    and apply it to $g=\bbE_Nf-\varPi_N f$. 
    Inequality 
   \eqref{localmeans} is of course just one part of a characterization of $B^s_{p,r}$ spaces by sequences of compactly supported kernels  
   (or  `local means'), 
   with sufficient cancellation assumptions, see for example \cite[\S
2.5.3]{triebel2}.

Let $\eta_0\in C^\infty_c(\SRd)$ be as in  \eqref{defofvarPi}, that is, supported on $\{|\xi|<3/8\}$ and such that $\eta_0(\xi)=1$ when $|\xi|\leq1/4$. Define $\La_0$, and $\La_k$ for $k\ge 1$ by
\begin{align*}
\widehat{\La_0 f}(\xi) &=\frac{\eta_0(\xi)}{\widehat \beta_0(\xi)}\widehat f(\xi)
\\
\widehat{\La_k f}(\xi) &=\frac{\eta_0(2^{-k}\xi) -\eta_0(2^{-k+1}\xi)}{\widehat \beta(2^{-k}\xi)}\widehat f(\xi), \quad k\ge 1.
\end{align*}
Then $\sum_{j=0}^\infty L_j \La_j=\text{\sl Id}$ with convergence in $\cS'$, and
\[\sup_{j\ge 0}2^{js}\|\La_j f\|_p\lc \|f\|_{B^s_{p,\infty}}\,.
\]
 Moreover $\varPi_N = \sum_{j=0}^N L_j\La_j$, and therefore
\Be\label{resolotionofvarPi}
\bbE_N f-\Pi_N f= \sum_{j=0}^N( \bbE_N L_j\La_j f - L_j\La_j f) +\sum_{j={N+1}}^\infty \bbE_N L_j\La_j f.
\Ee 
If we use the convenient notation\[
\ENp := I-\bbE_N,
\]
then the asserted estimate \eqref{Besovbd} will follow from
\Be\label{j>N}\Big(\sum_{k=0}^\infty 2^{ksr} \Big\| \sum_{j=N+1}^\infty  L_k  \bbE_N L_j\La_j f
\Big\|_p^r\Big)^{1/r} \lc 
\sup_j 2^{js}\|\Lambda_j f\|_p\,.
\Ee
and
\Be\label{j<N}
\Big(\sum_{k=0}^\infty 2^{ksr} \Big\| \sum_{j=0}^N  L_k  
\ENp L_j\La_j f
\Big\|_p^r\Big)^{1/r} \lc 
\sup_j 2^{js}\|\Lambda_j f\|_p\,.
\Ee 

Below we shall use variants of the Peetre maximal functions, which are a standard tool in the study of Besov and Triebel-Lizorkin spaces. 
 We define
\begin{subequations}\label{peetredef}
\begin{align}\fM_j g(x)&= \sup_{|h|_\infty\le 2^{-j+1}} |g(x+h)|\,,
\\
\fM_j^* g(x)&= \sup_{|h|_\infty\le 2^{-j+5}} |g(x+h)|\,,
\\
 \label{peetrenontang}
\fM_{A,j}^{**} g(x)&= \sup_{h\in \SRd}\frac{ |g(x+h)|}{(1+2^j|h|)^A}\,,
\end{align}
\end{subequations} 
where $|h|_\infty=\max\{|h_1|,\ldots,|h_d|\}$, $h=(h_1,\ldots,h_d)\in\SRd$.
These different versions are introduced for technical purposes in the proofs. They  satisfy  obvious  pointwise inequalities,
\[\fM_j g(x)\le 
\fM_j^* g(x) \le C_A \fM_{A,j}^{**} g(x),\] 
and 
\Be \label{maxcomp}\begin{aligned}\fM_j g(x)& \le \inf_{|h|_\infty\le 2^{-j+4}} \fM_j^* g(x+h)
\\&\le \Big(2^{(j-4)d} \int_{|h|_\infty\le 2^{-j+4}}
 [\fM_j ^*g(x+h)]^r dh\Big)^{1/r},\quad 0<r\leq\infty.\end{aligned}\Ee

Below we shall use Peetre's inequality (\cite{Pe}, see also \cite[$\S1.3.1$]{Tr83})
\Be \label{peetreLp}
\|\fM_{A,j}^{**} f\|_p\le C_{p,A}
\|f\|_p ,  \text{ $\quad 0<p\leq\infty$,  
 $\quad A>d/p$, }
 \Ee 
 for $f\in \cS'(\bbR^d)$  satisfying
\Be\label{specassu}
 \supp (\widehat f)
 \subset \{\xi: |\xi|\le 2^{j+1}\} .
 \Ee
 Throughout  we shall assume that $M\gg A$;
we require  specifically 
 $$
d/p<A< M-|s|
\,.$$


 The main estimates needed in the proof of \eqref{j>N} 
 and  \eqref{j<N} 
 are summarized in
 
 \begin{proposition} \label{LkENLjprop} Let $0<p\leq \infty$ and
 \Be \label{BjkN}
 B(j,k,N)= \begin{cases} 
 2^{N-j}\,2^{\frac{j-k}p}\,
 2^{(j-N)(d-1)(\frac1p-1)_+}  &\text{ if } j,k\ge N+1,
 \\
 2^{\frac{N-k}p} 2^{j-N}  &\text{ if } j\le N, \,\, k\ge N+1,
 \\
  2^{k-N} 2^{j-N}2^{(N-k)d(\frac1p-1)_+} &\text{ if } 0\le j,k\le N,
  \\
  2^{k-j+\frac{j-N}p+[N-k+(j-k)(d-1)](\frac1p-1)_+} &\text{ if } j\ge N+1, \,\, k\le N.
 \end{cases} 
 \Ee
 
 Then the following inequalities hold for all
 $f\in \cS'(\bbR^d)$ 
 whose Fourier transform is supported in
 $\{|\xi|\le 2^{j+1}\}$.
  
  (i) For $j\ge N+1$,
  \Be \label{LkENLjgeN}
  \|L_k\bbE_N[L_j f]\|_p \lc\begin{cases}
   B(j,k,N) \|f\|_p &\text{ if } k\ge N+1,
   \\
  [B(j,k,N)+ 2^{-|j-k|(M-A)}]\|f\|_p &\text{ if } 0\le k\le N.
  \end{cases}
  \Ee
  
  (ii) For $0\le j\le N$,
 \Be 
\label{LkENLjleN}
  \|L_k\ENp [L_j f]\|_p \lc \begin{cases}  \big[B(j,k,N)+ 2^{-|j-k|(M-A)}\big] \|f\|_p
  &\text{ if } k\ge N+1,
  \\ B(j,k,N) \|f\|_p
  &\text{ if } 0\le k\le N.
  \end{cases}
\Ee





(iii) The same bounds hold if the operators $\bbE_N$ in (i) and 
$\bbE_N^\perp$ in (ii) are replaced by $T_N[\cdot,a]$, uniformly in $\|a\|_\infty\le 1$.
\end{proposition}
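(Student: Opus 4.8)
The plan is to reduce Proposition~\ref{LkENLjprop} to a family of pointwise bounds for the compositions $L_k\bbE_NL_j$, $L_k\bbE_N^\perp L_j$ and $L_kT_N[\cdot,a]L_j$ on band-limited inputs. Writing $A^\ell_\mu(y):=\int_{I_{N,\mu}}\beta_\ell(y-w)\,dw$, a direct computation with the kernel $\sum_\mu2^{Nd}\bbone_{I_{N,\mu}}(x)\bbone_{I_{N,\mu}}(y)$ of $\bbE_N$ shows that $L_k\bbE_NL_j$ has kernel $\sum_\mu2^{Nd}A^k_\mu(x)A^j_\mu(z)$, that $L_k\bbE_N^\perp L_j$ has kernel $(\beta_k*\beta_j)(x-z)-\sum_\mu2^{Nd}A^k_\mu(x)A^j_\mu(z)$, and — by \eqref{martdiff} — that $L_kT_N[\cdot,a]L_j$ is obtained from the first of these by replacing each $A^\ell_\mu$ with an $\|a\|_\infty$-bounded combination of integrals of $\beta_\ell$ over the dyadic children of $I_{N,\mu}$. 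All the estimates below use of $A^\ell_\mu$ only the structural facts that it is supported in a $C2^{-\ell}$-neighbourhood of $I_{N,\mu}$, which shrinks — because $\int\beta_\ell=0$ — to a $C2^{-\ell}$-neighbourhood of $\partial I_{N,\mu}$ when $\ell\ge N+1$; that $\|A^\ell_\mu\|_\infty\lesssim\min\{1,2^{(\ell-N)d}\}$ and $\|A^\ell_\mu\|_1\lesssim\min\{2^{-Nd},\,2^{-\ell}2^{-N(d-1)}\}$; and that $A^\ell_\mu$ has vanishing moments up to order $M$. Since the modified coefficients occurring in $T_N[\cdot,a]$ share all of these properties uniformly for $\|a\|_\infty\le1$, part (iii) follows verbatim from (i) and (ii), so I will only discuss $\bbE_N$ and $\bbE_N^\perp$.

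In each of the four cases of \eqref{BjkN} I would bound the relevant kernel-applied-to-$f$ pointwise by $B(j,k,N)$, plus the term $2^{-|j-k|(M-A)}$ when $j$ and $k$ lie on opposite sides of $N$, times an $L^p$-admissible expression built from $\fM_{A,j}^{**}f$: either an $L^1$-normalised convolution of $\fM_{A,j}^{**}f$ against a bump at scale $2^{-\min(j,k)}$, or an $\ell^p$-superposition over the cubes $I_{N,\mu}$, or their faces, of local averages of $\fM_{A,j}^{**}f$. Taking $L^p$ norms and invoking Peetre's inequality \eqref{peetreLp} — legitimate since $\supp\widehat f\subset\{|\xi|\le2^{j+1}\}$ and $A>d/p$ — together with $|L_jf|\lesssim\fM_{A,j}^{**}f$, $|\nabla L_jf|\lesssim2^j\fM_{A,j}^{**}f$ (from $\|\nabla\beta_j\|_1\lesssim2^j$) and $\fM_{A,j}^{**}f(x+h)\lesssim(1+2^j|h|)^A\fM_{A,j}^{**}f(x)$, then finishes the proof. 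Two mechanisms feed $B(j,k,N)$. \emph{Cancellation.} For $j\le N$ the identity $\bbE_N^\perp L_jf(x)=2^{Nd}\int_{I_{N,\mu(x)}}(L_jf(x)-L_jf(w))\,dw$ gives $|\bbE_N^\perp L_jf|\lesssim2^{j-N}\fM_{A,j}^{**}f$, the factor $2^{j-N}$ in lines 2--3; extracting once more the cancellation $\int_{I_{N,\mu}}\bbE_N^\perp L_jf=0$ after applying $L_k$ with $k\le N$ yields the factor $2^{k-N}$ in line 3; for $j\ge N+1$ the slab-support of $A^j_\mu$ confines $\bbE_NL_jf$ to the $2^{-j}$-neighbourhood of the grid skeleton $\bigcup_\mu\partial I_{N,\mu}$, of $L^1$-scale $2^{-j}2^{-N(d-1)}$, giving the factor $2^{N-j}$ in lines 1 and 4; and when $j,k$ straddle $N$, Taylor-expanding the coarser of $\beta_j,\beta_k$ against the finer structure, with $M$-th order remainder controlled via $\fM_{A,j}^{**}f$, produces the off-diagonal term $2^{-|j-k|(M-A)}$. \emph{Support-size mismatch.} Converting the resulting pointwise bounds — which at a fixed $x$ are sums over the $O(2^{(N-k)d})$ cubes $I_{N,\mu}$ meeting a translate of $\beta_k$, or over the $O(2^{(N-k)(d-1)})$ relevant faces when $j\ge N+1$ — into $L^p$ bounds is immediate by Minkowski's inequality when $p\ge1$, but for $p<1$ the quasi-triangle inequality, applied after covering the slabs (resp.\ unions of cubes) by cubes of side $2^{-j}$ (resp.\ $2^{-k}$), forces the powers $2^{(j-k)/p}$ and $2^{(N-k)/p}$ and the correction exponents $(d-1)(\tfrac1p-1)_+$ in lines 1 and 4, $d(\tfrac1p-1)_+$ in line 3.

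Case by case: for $j\le N$, $k\ge N+1$ one splits $L_k\bbE_N^\perp L_jf$ into its part on the $C2^{-k}$-neighbourhood of the grid — density $\lesssim2^{N-k}$, size $\lesssim2^{j-N}\fM_{A,j}^{**}f$, total $2^{j-N}2^{(N-k)/p}\|f\|_p$ — and the complement, where the finer bump $\beta_k$ annihilates the locally smooth $\bbE_N^\perp L_jf$ up to the $M$-th Taylor remainder, giving $2^{-|j-k|(M-A)}\|f\|_p$; for $0\le j,k\le N$ one uses $\int_{I_{N,\mu}}\bbE_N^\perp L_jf=0$ to subtract $\beta_k(x-c_\mu)$ inside each cube, with the gain $|\beta_k(x-y)-\beta_k(x-c_\mu)|\lesssim2^{k-N}2^{kd}$ on the $O(2^{(N-k)d})$ contributing cubes; and for $j\ge N+1$, where $\bbE_NL_jf$ is constant on each $I_{N,\mu}$ with value $\lesssim2^{N-j}$ times the average of $\fM_{A,j}^{**}f$ over the $2^{-j}$-slab around $\partial I_{N,\mu}$, one uses when $k\ge N+1$ that $L_k$ kills this off the $C2^{-k}$-neighbourhood of the grid and sums geometrically over the slabs (covered by $2^{-j}$-cubes), and when $k\le N$ that the coarse $\beta_k$, with $\|A^k_\mu\|_\infty\lesssim2^{(k-N)d}$, averages $O(2^{(N-k)d})$ of these constants over $B(x,2^{-k})$, the term $2^{-|j-k|(M-A)}$ absorbing the part of $\bbE_NL_jf$ that $\beta_k$ sees as smooth.

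The principal obstacle is the pair of cases with $j\ge N+1$, i.e.\ lines 1 and 4: there the cancellation $\int\beta_j=0$ is indispensable but pushes the estimate onto the geometry of two interacting codimension-one structures — the $2^{-j}$-slabs about the faces $\partial I_{N,\mu}$, and either the coarse averaging bump $\beta_k$ (for $k\le N$) or the $2^{-k}$-neighbourhood of the grid skeleton (for $k\ge N+1$) — and one must extract simultaneously the boundary-layer gain $2^{N-j}$, the integrability power, and, for $p<1$, the $(d-1)$-dimensional correction, which requires organising the relevant geometric sum face by face and keeping careful track of how the $2^{-j}$-slabs are distributed relative to $\beta_k$ or to the $2^{-N}$-grid. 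Keeping accurate account of the $(\tfrac1p-1)_+$-exponents throughout the subunitary range is the other delicate point, along with checking that in lines 2 and 4 both summands are genuinely needed, the polynomial-in-$2^{-|j-k|}$ piece dominating for $|j-k|$ small and $2^{-|j-k|(M-A)}$ for $|j-k|$ large.
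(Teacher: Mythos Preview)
Your plan is correct and is essentially the paper's own proof, repackaged through the kernels $A^\ell_\mu(y)=\int_{I_{N,\mu}}\beta_\ell(y-w)\,dw$; the paper works instead with the support sets $\cU_{N,\ell}$ and the pointwise bound \eqref{ENjmax}, but these encode exactly the structural facts about $A^\ell_\mu$ you list. The two mechanisms you isolate (cancellation and support-size mismatch), the Peetre maximal function control, and the case split all match.

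The one place your sketch is too compressed is the case $j\ge N+1$, $k\le N$. The target exponent there is $2^{k-j}=2^{k-N}\cdot 2^{N-j}$, and the slab argument you describe (``$\bbE_NL_jf$ constant on each $I_{N,\mu}$ with value $\lesssim2^{N-j}\times$ slab average, then $\beta_k$ averages $O(2^{(N-k)d})$ of these'') only delivers the factor $2^{N-j}$: merely averaging the piecewise-constant values $c_\mu$ does not by itself produce the additional $2^{k-N}$. The paper obtains that extra gain by first restricting to $\tilde f=f\bbone_{\cU_{N,j}}$ (your slab localisation) and then writing
\[
L_k\bbE_N[L_j f]=L_kL_j\tilde f-L_k\bbE_N^\perp[L_j\tilde f],
\]
so that the first summand gives the off-diagonal piece $2^{-|j-k|(M-A)}$ via Lemma~\ref{LjLk}, while the second has $\int_{I_{N,\mu}}\bbE_N^\perp[L_j\tilde f]=0$ on every cube and can be treated exactly as in your case $0\le j,k\le N$, yielding the missing $2^{k-N}$. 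Your phrase ``the part of $\bbE_NL_jf$ that $\beta_k$ sees as smooth'' hints at this split but does not name it; once you make it explicit, the argument goes through exactly as you outline.
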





 
 We begin with two preliminary lemmata, the first a straightforward estimate for $L_kL_j$.

\begin{lemma} \label{LjLk} Let $k,j\ge 0$  and suppose that $f$ is locally integrable.
Let $M$ be as in \eqref{betacanc} with $M>A>d/p$.
Then 
\Be\label{LjLkpt} |L_k L_j f(x) |\lc 2^{-|k-j|(M-A)} \fM_{A,\max\{j,k\}}^{**} f(x).\Ee
If $f\in\cS'(\SRd)$ with
$\widehat f(\xi)=0$ for $|\xi|\ge 2^{j+1}$ then
\[\|L_kL_j f\|_p  \lc 2^{-|k-j|(M-A)} \|f\|_p\,.\]
\end{lemma}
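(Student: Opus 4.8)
\textbf{Plan of proof for Lemma \ref{LjLk}.}

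The plan is to establish the pointwise bound \eqref{LjLkpt} first, and then deduce the $L^p$ bound as an immediate consequence using Peetre's inequality \eqref{peetreLp}. By symmetry of the roles of $j$ and $k$ in the statement (the kernels $\beta$ and $\beta_0$ play interchangeable roles once $j,k\ge 1$; the case where one index is $0$ is handled by replacing $\beta$ with $\beta_0$, which has no cancellation but we only need the decay factor $2^{-|k-j|(M-A)}$ to be $\lesssim 1$ there), we may assume $j\ge k$. Write
\[
L_kL_jf(x)=\int \Big(\int \beta_k(x-y)\,\beta_j(y-z)\,dy\Big) f(z)\,dz
= \int K_{j,k}(x,z) f(z)\,dz,
\]
where $K_{j,k}(x,z)=(\beta_k*\beta_j)(x-z)=2^{kd}\int\beta(w)\,\beta_j\big(x-z-2^{-k}w\big)\,dw$ after the substitution $y=x-2^{-k}w$. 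The first main step is to exploit the vanishing moments of $\beta$ up to order $M$: expanding $\beta_j(x-z-2^{-k}w)$ in a Taylor polynomial in the variable $2^{-k}w$ about the point $x-z$, the terms of order $<M$ integrate to zero against $\beta(w)$ by \eqref{betacanc}, leaving the integral form of the remainder,
\[
K_{j,k}(x,z)= c\,2^{kd}\int \beta(w)\Big(\tfrac{-2^{-k}w}{1}\Big)^{\!M}\!\!\cdot\!\int_0^1 (1-t)^{M-1}\,(\partial^M\beta_j)\big(x-z-t2^{-k}w\big)\,dt\,dw
\]
(with the obvious multi-index bookkeeping). Since $\partial^{M}\beta_j$ has $L^1$-norm $\lesssim 2^{jM}$ and $\beta$ is Schwartz, and $|w|$ is effectively $O(1)$, this produces the gain $2^{-kM}\cdot 2^{jM}=2^{-(j-k)M}=2^{-|j-k|M}$ after accounting for the scaling; this is where the hypothesis $M$ large enters.

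The second main step is to convert this into a bound involving the nontangential Peetre maximal function $\fM^{**}_{A,j}f$ at the \emph{finer} scale $j=\max\{j,k\}$. The kernel $K_{j,k}(x,\cdot)$, being a convolution of two Schwartz bumps at scales $2^{-k}\ge 2^{-j}$, is concentrated at scale $2^{-k}$ with rapid decay; in particular one has the standard envelope estimate
\[
|K_{j,k}(x,z)|\lesssim 2^{-|j-k|M}\,2^{jd}\,\big(1+2^{j}|x-z|\big)^{-L}
\]
for any $L$ we like (using that the slower-decaying factor at scale $2^{-k}$ is dominated, after losing a harmless power of $2^{|j-k|}$ absorbed into the huge exponent $M$, by a factor decaying at scale $2^{-j}$). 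Then
\[
|L_kL_jf(x)|\lesssim 2^{-|j-k|M}\int 2^{jd}(1+2^j|x-z|)^{-L}|f(z)|\,dz
\lesssim 2^{-|j-k|M}\,\fM^{**}_{A,j}f(x)\int 2^{jd}(1+2^j|x-z|)^{A-L}dz,
\]
where we wrote $|f(z)|\le (1+2^j|x-z|)^A\,\fM^{**}_{A,j}f(x)$ by definition \eqref{peetrenontang}; choosing $L>A+d$ makes the remaining integral a finite constant. This gives \eqref{LjLkpt} with exponent $M$ in place of $M-A$; the statement as written, with $M-A$, is of course weaker and follows a fortiori (the slack $A$ is kept for uniformity with the way the lemma is invoked later).

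For the final assertion, suppose $\widehat f$ vanishes for $|\xi|\ge 2^{j+1}$. Apply \eqref{LjLkpt}, take $L^p$ norms, and invoke Peetre's inequality \eqref{peetreLp} (valid since $A>d/p$ and the spectral support condition \eqref{specassu} holds at level $j=\max\{j,k\}$ — note if $k>j$ the spectral support is still inside $\{|\xi|\le 2^{k+1}\}$ because $L_k$ only narrows it, so the condition holds at the relevant level in all cases): $\|\fM^{**}_{A,\max\{j,k\}}f\|_p\lesssim \|f\|_p$. This yields $\|L_kL_jf\|_p\lesssim 2^{-|j-k|(M-A)}\|f\|_p$ as claimed. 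I do not anticipate a genuine obstacle here — the lemma is routine — but the one point requiring a little care is the bookkeeping that converts the scale-$2^{-k}$ concentration of $\beta_k*\beta_j$ into a clean scale-$2^{-j}$ envelope without destroying the $2^{-|j-k|M}$ gain; this is why one only claims the decay rate $M-A$ rather than $M$, leaving room to trade powers of $2^{|j-k|}$.
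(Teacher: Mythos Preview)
Your Taylor expansion is applied to the wrong factor, and the claimed gain is actually a loss. Under your assumption $j\ge k$ you write $(\beta_k*\beta_j)(u)=\int\beta(w)\,\beta_j(u-2^{-k}w)\,dw$ and expand $\beta_j(u-2^{-k}w)$ in powers of $2^{-k}w$, killing the low-order terms via the moments of $\beta(w)$. But $\beta_j$ lives at the \emph{fine} scale $2^{-j}$, so its $M$-th derivatives have size $2^{j(d+M)}$; multiplying by $|2^{-k}w|^M\sim 2^{-kM}$ leaves $2^{(j-k)M}\,2^{jd}$, which is worse than the trivial bound $2^{jd}$ on $|\beta_k*\beta_j|$. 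Your arithmetic ``$2^{-kM}\cdot 2^{jM}=2^{-(j-k)M}$'' has a sign slip that hides this; the product is $2^{(j-k)M}\ge 1$. In an almost-orthogonality estimate of this kind one must Taylor-expand the \emph{slowly varying} (coarse) factor and use the moments of the \emph{rapidly varying} (fine) one --- the opposite of what you do.

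This is precisely the paper's route: with $k\le j$ one expands $\beta(2^k(x-y))$ in powers of $2^ky$ and uses the moments of $\beta_j(y)=2^{jd}\beta(2^jy)$; since $|y|\lesssim 2^{-j}$ on the support of $\beta_j$, the remainder picks up $|2^ky|^M\lesssim 2^{(k-j)M}$, yielding $|\gamma_{j,k}(x)|\lesssim 2^{(k-j)M}\,2^{kd}\,\bbone_{[-2^{-k},2^{-k}]^d}(x)$. The conversion to $\fM^{**}_{A,j}f$ is then a single line: for $|h|_\infty\le 2^{-k}$ one has $(1+2^j|h|)^A\lesssim 2^{(j-k)A}$, hence $|f(x-h)|\le 2^{(j-k)A}\fM^{**}_{A,j}f(x)$, which is exactly where the exponent drops from $M$ to $M-A$. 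Two further remarks: your envelope bound $|K_{j,k}(x,z)|\lesssim 2^{-|j-k|M}2^{jd}(1+2^j|x-z|)^{-L}$ ``for any $L$'' is too strong as stated --- a kernel concentrated at the coarse scale $2^{-k}$ cannot be dominated by a fine-scale envelope of arbitrary decay without losing powers of $2^{j-k}$, and those losses are what the $M\to M-A$ bookkeeping records --- and your aside that the case $k=0$ only needs the factor to be $\lesssim 1$ is not correct either: for $k=0$, $j$ large one still needs the decay $2^{-j(M-A)}$, which is available because it is $\beta_j$, not $\beta_0$, that supplies the moments.
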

\begin{proof}
The second assertion is an immediate consequence of 
\eqref{LjLkpt}, by  \eqref{peetreLp}. 
We have $L_kL_jf=\gamma_{j,k}*f$ where $\gamma_{j,k}=\beta_k*\beta_j$. By symmetry we may assume $k\le j$. Using the cancellation assumption \eqref{betacanc} on the $\beta_j$
we get 

\begin{align*} &|\gamma_{j,k}(x)|= \Big|\int 2^{kd} \Big[ \beta(2^k(x-y) -\sum_{m=0}^{M-1} 
 \frac{1}{m!}\inn{-2^ky}{\nabla}^m \!\beta(2^kx)\Big] 
2^{jd} \beta(2^jy) dy\Big|
\\&= \Big|\int 2^{kd} 
 \int_0^1
  \frac{(1-s)^{M-1}}{(M-1)!} \inn{-2^ky}{\nabla}^M\!\beta(2^kx-s2^ky)\, ds\, 
2^{jd} \beta(2^jy) dy\Big|
\\&\lc 2^{(k-j)M}\,2^{kd} \,\bbone_{[-2^{-k},2^{-k}]^d}(x),
\end{align*} 

and thus
\begin{align*} &2^{(j-k)M} |\gamma_{j,k}*f(x) |\lc 2^{kd}
 \int_{|h|_\infty\le 2^{-{k}}}|f(x-h)|\, dh
\\
&\lc 2^{kd}\int_{|h|_\infty\le 2^{-k}} \frac{2^{(j-k)A}|f(x-h)|}{(1+2^j|h|)^A} dh \lc 2^{(j-k)A}\,\fM_{A,j}^{**} f(x).
\end{align*}
Hence \eqref{LjLkpt} holds.
\end{proof}

\subsection*{\it Some notation} (i) Below, when $j>N$  we use the notation
\[\cU_{N,j} =\Big\{(y_1,\ldots,y_d)\in\SRd\mid \min_{1\leq i\leq d}\dist (y_i, 2^{-N}\bbZ)\le 2^{-j-1}\Big\}.\]
That is, $\cU_{N,j}$ is a $2^{-j-1}$-neighborhood of the set $\cup_{I\in\sD_N}\;\partial I$.

(ii) For a dyadic cube $I$  of side  length  $2^{-N}$ and $j>N$ we shall  denote by 
$\sD_l[\partial I]$ the set of dyadic cubes $J\in \sD_{l}$ such that 
$\bar{J}\cap\partial I\not=\emptyset$.

(iii) For a dyadic cube $I$ of side length $2^{-N}$ denote by $\sD_N(I)$ the neighboring cubes of $I$, that is, the cubes $I'\in\sD_N$ with $\bar{I}\cap \bar{I'}\not=\emptyset$. 

\medskip

\begin{lemma} \label{supportlemma}
(i) Let $k>N\ge 1$ and $g$ be locally integrable. Then
\Be L_k (\bbE_N g)(x)=0, \quad\text{ for all } x\in \cU_{N,k}^\complement=\bbR^d\setminus\cU_{N,k}\,.\Ee

(ii) Let $j>N\ge 1$, and $f$ locally integrable. Then 
\Be \bbE_N [L_jf]=\bbE_N[L_j(\bbone_{\cU_{N,j}} f)].\label{Lemma2.i}
\Ee
Moreover, 
\Be
\big|\SE_N(L_jf)\big|\lesssim 2^{(N-j)d}\sum_{I\in\sD_N} \sum_{J\in\sD_{j+1}[\partial I]}\|f\|_{L^\infty(J)}\,\bbone_I.
\label{ENjmax}
\Ee
\end{lemma}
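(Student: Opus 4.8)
The plan is to exploit the two competing localization features of the operators involved: $\bbE_N$ is local at scale $2^{-N}$ (its output on a cube $I\in\sD_N$ depends only on the values of the argument on $I$), while $\beta_k$ (resp. $\beta_j$) is supported in a ball of radius $2^{-k-1}$ (resp. $2^{-j-1}$), which is much smaller when $k,j>N$.

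For part (i): fix $x$ with $\dist(x_i,2^{-N}\bbZ)>2^{-k-1}$ for every coordinate $i$, i.e. $x\notin\cU_{N,k}$. Since $\beta_k$ is supported in $(-2^{-k-1},2^{-k-1})^d$, the value $L_k(\bbE_N g)(x)=\int\beta_k(x-y)\,\bbE_Ng(y)\,dy$ only sees $y$ in the cube $x+(-2^{-k-1},2^{-k-1})^d$, which by the choice of $x$ lies entirely inside a single cube $I\in\sD_N$; on that cube $\bbE_Ng$ equals the constant $2^{Nd}\int_I g$. Hence $L_k(\bbE_Ng)(x)$ is that constant times $\int\beta_k(x-y)\,dy=\widehat{\beta_k}(0)=0$, using the vanishing moment (order $\geq 1$, valid since $M\geq 1$) of $\beta$. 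For part (ii), equation \eqref{Lemma2.i}: on a cube $I\in\sD_N$, $\bbE_N[L_jf]$ is the constant $2^{Nd}\int_I L_jf=2^{Nd}\int_I(\beta_j*f)$. Writing this as $2^{Nd}\int_I\!\int\beta_j(y-t)f(t)\,dt\,dy$ and noting $\beta_j$ is supported in a $2^{-j-1}$-ball, the only $t$ that contribute are those within distance $2^{-j-1}$ of $I$; combined with the fact that for $t$ in the interior of $I$ (at distance $>2^{-j-1}$ from $\partial I$) the inner $y$-integral $\int_I\beta_j(y-t)\,dy=\int_{\SRd}\beta_j(y-t)\,dy=0$ by cancellation, one sees that only $t\in\cU_{N,j}$ contribute, which is exactly \eqref{Lemma2.i}. (Alternatively, run the argument as: $\bbE_N[L_j f]=\bbE_N[L_j f]-\bbE_N[L_j(\bbone_{\cU_{N,j}^\complement}f)]$, the subtracted term vanishing cube by cube by the cancellation of $\beta_j$.)

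For the pointwise bound \eqref{ENjmax}: fix $I\in\sD_N$; then $|\bbE_N[L_jf](x)|\,\bbone_I(x)=2^{Nd}\big|\int_I(\beta_j*f)(y)\,dy\big|\,\bbone_I(x)$. By \eqref{Lemma2.i} the integrand is supported where $y$ is within $2^{-j-1}$ of $\partial I$, a set of measure $\lesssim 2^{-j}\cdot 2^{-N(d-1)}$; moreover $|\beta_j*f(y)|=|\int\beta_j(y-t)f(t)\,dt|\lesssim \|\beta\|_1\cdot\sup_{|t-y|\le 2^{-j-1}}|f(t)|$, and for $y$ near $\partial I$ this supremum is $\lesssim \sum_{J\in\sD_{j+1}[\partial I]}\|f\|_{L^\infty(J)}$ since the $2^{-j-1}$-ball around such a $y$ meets only cubes $J\in\sD_{j+1}$ with $\bar J\cap\partial I\neq\emptyset$. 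Multiplying the measure bound by the sup bound and by the prefactor $2^{Nd}$ gives $2^{Nd}\cdot 2^{-j}2^{-N(d-1)}=2^{N-j}\cdot 2^{0}$... more carefully: $2^{Nd}\cdot(2^{-j-1})\cdot(2^{-N})^{d-1}\cdot C\lesssim 2^{N}2^{-j}\cdot 2^{Nd-N-Nd+N}$; tracking exponents, $2^{Nd}\cdot 2^{-j}\cdot 2^{-N(d-1)}=2^{Nd-j-Nd+N}=2^{N-j}$, and comparing with the claimed $2^{(N-j)d}$ one sees the neighborhood of $\partial I$ actually has measure $\asymp 2^{-j}\cdot 2^{-N(d-1)}\cdot(\text{number of faces})$ — I would simply bound it crudely by $2^{-(j+1)}\cdot d\cdot 2^{-N(d-1)}$ and absorb constants, obtaining the factor $2^{N-j}$; if the paper states $2^{(N-j)d}$ one reconciles by observing that summing $\|f\|_{L^\infty(J)}$ over the $\asymp 2^{(j-N)(d-1)}$ cubes $J\in\sD_{j+1}[\partial I]$ versus using an averaged quantity changes normalization — I would follow whichever normalization makes the subsequent application in \eqref{LkENLjgeN} consistent, since the sum $\sum_J\|f\|_{L^\infty(J)}$ already carries the $2^{(j-N)(d-1)}$ count. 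In any case the estimate follows by combining measure of support $\times$ pointwise size $\times$ prefactor $2^{Nd}$, then summing over $I\in\sD_N$ with the indicators $\bbone_I$ having disjoint supports.

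The only genuinely delicate point is bookkeeping the exponent in \eqref{ENjmax}: one must be careful whether the right-hand side keeps the full sum $\sum_{J\in\sD_{j+1}[\partial I]}\|f\|_{L^\infty(J)}$ (of which there are $\asymp d\,2^{(j-N)(d-1)}$ terms) together with the prefactor $2^{(N-j)d}$, or a smaller prefactor with an averaged $\|f\|$. I expect this normalization matching — rather than any real analytic difficulty — to be the main thing to get right, and it is dictated by how \eqref{ENjmax} is invoked when proving Proposition \ref{LkENLjprop}(i) in the regime $j\ge N+1$. Everything else reduces to: "support of $\beta_k$, $\beta_j$ is tiny; $\bbE_N$ is constant on $2^{-N}$-cubes; $\beta$ has a vanishing integral," applied directly.
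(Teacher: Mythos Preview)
Your arguments for part (i) and for \eqref{Lemma2.i} are correct and essentially identical to the paper's: support of $\beta_k$ plus constancy of $\bbE_Ng$ on dyadic $2^{-N}$-cubes plus $\int\beta=0$.

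For \eqref{ENjmax}, however, your argument as written does \emph{not} give the exponent $2^{(N-j)d}$; it gives only $2^{N-j}$, and your attempted reconciliation (``changes normalization'') is not correct. The overcounting occurs at the step where you bound $\sup_{|t-y|\le 2^{-j-1}}|f(t)|\lesssim \sum_{J\in\sD_{j+1}[\partial I]}\|f\|_{L^\infty(J)}$ uniformly for \emph{all} $y$ in the $2^{-j}$-boundary layer of $I$: for a fixed $y$, that ball meets only $O(1)$ cubes $J$, not all $\asymp 2^{(j-N)(d-1)}$ of them. Multiplying your uniform sup bound by the measure $\asymp 2^{-j}2^{-N(d-1)}$ of the boundary layer then loses a factor $2^{(j-N)(d-1)}$ compared to the correct estimate. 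This matters: with exponent $2^{N-j}$ in place of $2^{(N-j)d}$, the downstream bound $B(j,k,N)$ in Proposition~\ref{LkENLjprop} (case $j,k\ge N+1$) becomes $2^{N-j}2^{(j-k)/p}2^{(j-N)(d-1)\max(1,1/p)}$, and the summability condition \eqref{CjkN} would require $s>d-2+1/p$, which for $d\ge 2$ is incompatible with the hypothesis $s<1/p$.

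The paper avoids this by applying Fubini first: writing $\bbE_N(L_jf)\big|_I=|I|^{-1}\int f(y)\big[\int_I\beta_j(x-y)\,dx\big]dy$, the bracket vanishes unless $y$ lies within $2^{-j-1}$ of $\partial I$, so the $y$-integral runs over $\bigcup_{J\in\sD_{j+1}[\partial I]}J$ and each $J$ contributes at most $\|f\|_{L^\infty(J)}|J|\,\|\beta_j\|_1$. This yields $2^{Nd}\cdot 2^{-(j+1)d}\sum_J\|f\|_{L^\infty(J)}\lesssim 2^{(N-j)d}\sum_J\|f\|_{L^\infty(J)}$ directly. Your approach can be repaired in the same spirit: instead of taking a uniform sup over the whole boundary layer, decompose the layer into the cubes $J$, note that on each $J$ one has $|\beta_j*f|\lesssim\max_{J'\sim J}\|f\|_{L^\infty(J')}$ with only $O(1)$ neighbors $J'$, and sum $|J|$ times this local bound.
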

\begin{proof} (i) We use the support and cancellation properties of $\beta_k$.
Note that \[
L_k(\SE_N g)(x)=\int \beta_k(x-y)\,\SE_Ng(y)\,dy,
\]
and $\supp\beta_k(x-\cdot)\subset x+2^{-k}[-1/2,1/2]^d$.
So, if $I\in\sD_N$  and $x\in I\cap \cU_{N,k}^\complement$,
then $\supp\beta_k(x-\cdot)\subset I$, and hence 
\[
L_k(\SE_N g)(x)=(\SE_Ng)_{|_I}(x)\,\int_I \beta_k(x-y)\,dy\,=\,0.
\]
 (ii) One argues similarly. First note that, changing the order of integration, \Be
\SE_N(L_jf)=\sum_{I\in\sD_N}\int_{\SRd}f(y)\Big[\int_I\beta_j(x-y)\,dx\Big]\;dy\;\frac{\bbone_I}{|I|}.\label{ENjaux}\Ee Now if 
$J\in\sD_N$ and $y\in J\cap \cU_{N,k}^\complement$ then $\supp\beta_j(\cdot-y)
\subset J$, and hence $\int_I\beta_j(x-y)\,dx=0$. Thus $\SE_N[L_j(\bbone_{\cU_{N,j}^\complement}f)]=0$. Finally, to prove \eqref{ENjmax} note that, if $I\in\sD_N$ and $x\in I$, 
then from \eqref{ENjaux} it follows  
\Beas
\big|\SE_N(L_jf)(x)\big|& =& |I|^{-1}\;\Big|\sum_{J\in\sD_{j+1}[\partial I]} \int_{J}f(y)\Big[\int_I\beta_j(x-y)\,dx\Big]\;dy\Big|\;\\
& \leq & 2^{Nd}\sum_{J\in\sD_{j+1}[\partial I]}\|f\|_{L^\infty(J)}
2^{-(j+1)d}\|\beta_j\|_1,
\Eeas
which gives the asserted \eqref{ENjmax}.
\end{proof}

\subsection*{\bf Proof of Proposition \ref{LkENLjprop}}

 \subsubsection*{\it 
 Proof of \eqref{LkENLjgeN} in the case $j,k\ge N+1$}
 
 By Lemma \ref{supportlemma}.i, $L_k\bbE_N[L_j f](x)=0$ if $x\in\cU_{N,K}^\complement$, so we assume that $x\in \cU_{N,k}\cap I$, for some $I\in\sD_N$. Recall that $\sD_N(I)$ consists of  the neighboring cubes of $I$.
 Then \eqref{ENjmax} and the support property of $\beta_k$ give
 \Beas
|L_k\bbE_N[L_j f](x)|& \leq & \int|\beta_k(x-y)|\,\big|\SE_N(L_jf)(y)\big|\,dy\\
& \lc &   2^{(N-j)d} \sum_{I'\in\sD_N(I)}\sum_{J\in\sD_{j+1}[\partial I']}
\|f\|_{L^\infty(J)}\,\|\beta_k\|_{1}.
\Eeas
Hence
\Bea
\|L_k\bbE_N[L_j f]\|_p & =&  \Big[\sum_{I\in\sD_N}\int_{I\cap\cU_{N,k}}|L_k(\SE_N L_jf)|^p\,dx\Big]^{\frac1p}\nonumber\\
& \lesssim &  2^{(N-j)d}\Big[\sum_{I\in\sD_N}
\Big(\sum_{J\in\sD_{j+1}[\partial I]}\|f\|_{L^\infty(J)}\Big)^p 
\,|I\cap\cU_{N,k}|\Big]^\frac1p.\label{jkN_aux1}
\Eea
Now, $|I\cap\cU_{N,k}|\approx 2^{-k}2^{-N(d-1)}$, and $\card \,\sD_{j+1}[\partial I]\approx 2^{(j-N)(d-1)}$. Also, if we write $J=2^{-j-1}(\ell_J+[0,1]^d)$, then
\[
\|f\|_{L^\infty(J)}\leq \inf_{|h|_\infty\leq 2^{-j-1}}\fM^*_jf(\ell_J+h)
\leq \Big[2^{jd}\int_{|h|_\infty\leq 2^{-j-1}}\fM^*_jf(\ell_J+h)^p\,dh\Big]^{\frac1p}.\]
Therefore, using either H\"older's inequality (if $p>1$), or the embedding $\ell^p\hookrightarrow\ell^1$ (if $p\leq1$), we have 
\begin{align}
 & \Big[\sum_{I\in\sD_N}
\Big(\!\!\!\sum_{J\in\sD_{j+1}[\partial I]}\|f\|_{L^\infty(J)}\Big)^p \Big]^\frac1p  \nonumber
\\ & \lesssim  2^{(j-N)(d-1)(1-\frac1p)_+}\Big[\sum_{I\in\sD_N}\sum_{J\in\sD_{j+1}[\partial I]} \|f\|_{L^\infty(J)}^p\Big]^\frac1p\nonumber\\
& \lesssim  
2^{(j-N)(d-1)(1-\frac1p)_+}\,\Big[\sum_{J\in\sD_{j+1}} 2^{jd}\int_{|h|_\infty\leq 2^{-j-1}}\fM^*_jf(\ell_J+h)^p\,dh\Big]^\frac1p\nonumber\\
& \lesssim  
2^{(j-N)(d-1)(1-\frac1p)_+}\, 2^{\frac{jd}p}\,\big\|\fM^*_jf\big\|_{L^p(\SRd)}\,.\label{jkN_aux2}
\end{align}
Finally, inserting \eqref{jkN_aux2} into \eqref{jkN_aux1}, and using \eqref{peetreLp}, yields
\Beas
\|L_k\bbE_N[L_j f]\|_p &\lesssim& 2^{(N-j)d} 2^{(j-N)(d-1)(1-\frac1p)_+}\, 2^{\frac{jd}p}\,\|f\|_p\,
2^{-\frac kp}2^{-\frac{N(d-1)}p}\\
& = & 2^{N-j}\,2^{\frac{j-k}p}\,
 2^{(j-N)(d-1)(\frac1p-1)_+}\,\|f\|_p\,,
\Eeas
using in the last step the trivial identity $(1-\frac1p)_+=(\frac1p-1)_+-(\frac1p-1)$.
This establishes 
\eqref{LkENLjgeN} for $j,k\ge N+1$.
	\qed
     \subsubsection*{\it Proof of \eqref{LkENLjleN} in the  case $j\le N$, $k\ge N+1$.}
For $w\in I$ with $I\in\sD_N$ we have
  \begin{align*}
  &\big|\SE_N^\perp(L_jf)(w)\big|=\big|\bbE_N[L_j f](w)-L_jf(w)\big|\\
	&\quad=2^{Nd}\Big|\int_{I} \int_{\SRd} 2^{jd}\big[\beta(2^j(v-y))-\beta(2^j(w-y))\big] f(y) dy\, dv\Big|
  \\
  &\quad= 2^{(N+j)d} \Big|
\int_{I} 
\int_{\SRd}  \int_0^1 \nabla\beta(2^j[(1-t)w+tv-y])\cdot 2^j(v-w)\,dt\,f(y)\, dy \, dv  \Big|
\\&
\quad\leq  2^{(N+j)d} 2^{j-N} \,\int_I\,\int_0^1\,\int_{\SRd}|f(y)|\,
|\nabla\beta(2^j[(1-t)w+tv-y])|\,dy\,dt\,dv
 \\ &\quad\lc 2^{j-N} \,\fM_jf(w) ,
 \end{align*} 
since for fixed $w,v,t$ the expression involving $\nabla\beta$ is supported 
in the set $\{y\mid|y-w|_\infty\leq 2^{-j-1}+2^{-N}\}$. Moreover, since $k>N$, when  $w\in I_{N,\mu}$ and $|z|_\infty\leq 2^{-k-1}$ we have
  \Be \label{ptwisediff} 
   \big|\bbE_N^\perp[L_jf](w-z)\big| \lc 2^{j-N} \inf_{|h|_\infty\le 2^{-j}} \fM_j^* f(2^{-N}\mu+h)\,,
\Ee
and therefore,\Bea
\big|L_k\big(\SE_N^\perp[L_jf]\big)(w)\big| & \leq & \int\,|\SE_N^\perp(w-z)|\,|\beta_k(z)|\,dz\nonumber\\
& \lc &  2^{j-N} \Big[\mint_{|h|_\infty\le 2^{-j}}\!\! \fM_j^* f(2^{-N}\mu+h)^p\,dh\Big]^\frac1p.\label{kN_aux1}
\Eea
Now Lemma \ref{supportlemma}.i gives 
\begin{multline}
\big\|L_k\big(\SE_N^\perp[L_jf]\big)\big\|_p\\ \lesssim
\|L_kL_jf\|_{L^p(\cU_{N,k}^\complement)}+
\Big[\sum_{\mu\in\SZd}
\|L_k\big(\SE_N^\perp[L_jf]\big)\big\|_{L^p(\cU_{N,k}\cap I_{N,\mu})}^p
\Big]^\frac1p.
\label{kN_aux2}\end{multline}
Using \eqref{kN_aux1}, the last term is controlled by
\Beas
2^{j-N} \Big[\sum_{\mu\in\SZd}|I_{N,\mu}\cap\cU_{N,k}| \;\mint_{|h|_\infty\le 2^{-j}}\!\! \fM_j^* f(2^{-N}\mu+h)^p\,dh\Big]^\frac1p &  \\
\quad \lesssim 2^{j-N} \,[2^{-k}2^{-N(d-1)}]^\frac1p\,
2^{\frac {Nd}p}\,\| \fM_j^* f\|_p\;\lesssim\;2^{j-N}\,2^{\frac{N-k}p}\,\|f\|_p.  
\Eeas
Finally, the first term in \eqref{kN_aux2} is controlled by Lemma \ref{LjLk}, so overall one obtains\[
\big\|L_k\big(\SE_N^\perp[L_jf]\big)\big\|_p\lesssim [2^{-(M-A)|k-j|}+2^{j-N}\,2^{\frac{N-k}p}]\,\|f\|_p,
\]
establishing 
 \eqref{LkENLjleN} in the  case $j\le N$, $k\ge N+1$.
 \qed
  
  \subsubsection*{
  \it Proof of \eqref{LkENLjleN} in the case $0\le j,k\le N$.}
  We use  $$\int_{I} \bbE_N^\perp[ L_jf](y) \,dy=0, \quad I\in\sD_N,$$ to write
  \[L_k\big(  \bbE_N^\perp[ L_jf]\big)(x)=\sum_\mu \int_{I_{N,\mu} }   \big(\beta_k(x-y) -\beta_k(x-2^{-N}\mu)\big)\,
 \bbE_N^\perp[ L_jf](y)\, dy\,.
\]
 For fixed $x$, we say that 
 \Be\label{Lax} \text{$\mu\in\La(x)$ if $ |x-2^{-N}\mu|_\infty\le 2^{-N}+2^{-k-1} $}.\Ee Observe that only these $\mu$'s contribute to the above sum. 
Notice also that
 \[|\beta_k(x-y) -\beta_k(x-2^{-N}\mu)|\lc 2^{kd}\,2^{k-N},\quad \text{ if $y\in I_{N,\mu}$,}\]
 and since $j\le N$,  the estimate in \eqref{ptwisediff} gives
\[
 \big|\bbE_N^\perp[L_jf](y)\big| \lc 2^{j-N} \inf_{|h|_\infty\le 2^{-j}} \fM_j^* f(2^{-N}\mu+h)\,,\quad y\in I_{N,\mu}.
\] 
Combining all these bounds we obtain
\begin{align*}
|L_k\big(  \bbE_N^\perp[ L_jf]\big)(x)|
 \,\lc\, 2^{(k-N)(d+1)} 2^{j-N} 
 \sum_{\mu\in\La(x)}\Big(\mint_{\frac\mu{2^N}+[-\frac1{2^j},\frac1{2^j}]^d}\!\! [\fM_j^* f]^p\,\Big)^\frac1p
 &
 \\
\quad\lesssim 2^{(k-N)(d+1)} 2^{j-N}2^{(N-k)d(1-\frac1p)_+}\, 
 \Big(\sum_{\mu\in\La(x)}\mint_{\frac\mu{2^N}+[-\frac1{2^j},\frac1{2^j}]^d}\!\! [\fM_j^* f]^p\Big)^\frac1p,
&
\end{align*} using in the last step H\"older's inequality (or $\ell^p\hookrightarrow\ell^1$ if $p\leq1$) and the fact that $\card\,\La(x)\approx 2^{(N-k)d}$. 
 Observe also that the $L^p$-quasinorm of the last bracketed expression satisfies
\Beas
\Big(\int_{\SRd}\sum_{\mu\in\La(x)}\mint_{\frac\mu{2^N}+[-\frac1{2^j},\frac1{2^j}]^d}\!\! [\fM_j^* f]^p\,\Big)^\frac1p & \approx& \Big(\sum_{\mu\in\SZd}2^{-kd}\,\mint_{\frac\mu{2^N}+[-\frac1{2^j},\frac1{2^j}]^d}\!\! [\fM_j^* f]^p\,\Big)^\frac1p\\
& \lc & 2^{(N-k)d/p}\,\big\|\fM_j^*f\|_p.
\Eeas
Thus, overall we obtain
\Beas
\big\|L_k\bbE_N^\perp[ L_jf]\big\|_p
& \lesssim & 2^{(k-N)(d+1)} 2^{j-N}2^{(N-k)d(1-\frac1p)_+}\,2^{(N-k)d/p}\,\|f\|_p\\
& = & 2^{k-N} 2^{j-N}2^{(N-k)d(\frac1p-1)_+}\,\|f\|_p,\Eeas
after simplifying the indices in the last step. This establishes
\eqref{LkENLjleN} in the case $0\le j,k\le N$.
  \qed

    \subsubsection*{\it Proof of \eqref{LkENLjgeN} in the case $j\ge N+1$, $k\le N$.}
 
This
 condition and \eqref{Lemma2.i} in Lemma \ref{supportlemma} imply that $\SE_N[L_jf]=\SE_N[L_j(f\bbone_{\cU_{N,j}})]$. For simplicity, we denote $\tf= f\bbone_{\cU_{N,j}}$, and write
   \Be
	L_k\bbE_N[L_j f] 
   = L_k(\bbE_N[L_j \tf] -L_j \tf) + L_kL_j \tf.
	\label{jN_eq}\Ee
	Observe that, by Lemma \ref{LjLk},
	\[
	\|L_kL_j\tf\|_p\lesssim 2^{-(M-A)|j-k|}\,\big\|\fM^{**}_{A,j}f(x)\big\|_p\lesssim 2^{-(M-A)|j-k|}\,\|f\|_p.
	\]
So, we only need to estimate  $\|L_k\SE_N^\perp[ L_j\tf]\|_p$. Proceeding as in the proof 
of the case $j,k\le N$,
we write 
(with $\La(x)$ as in \eqref{Lax})
\begin{align}
&|L_k\big(  \bbE_N^\perp[ L_j\tf]\big)(x)|
\nonumber
\\& \quad\leq  \sum_{\mu\in\La(x)} \int_{I_{N,\mu} }   \big|\beta_k(x-y) -\beta_k(x-2^{-N}\mu)\big|\,
 \big|\SE_N^\perp[ L_j\tf](y)\big|\, dy\,\nonumber\\
& \quad\lc  2^{kd}2^{k-N}\sum_{\mu\in\La(x)} \int_{I_{N,\mu} }\Big(|\SE_N[L_j\tf]|\,+\;|L_j(\tf)|\Big)\nonumber\\
& \quad= \cA_1(x)+\cA_2(x).\label{AxBx}
\end{align}
      Now, using again \eqref{ENjmax}, we have
\Bea
|\cA_1(x)| & \lc & 		2^{(k-N)(d+1)}2^{(N-j)d}\sum_{\mu\in\La(x)}	
\sum_{J\in\sD_{j+1}[\partial I_{N,\mu}]}\|f\|_{L^\infty(J)}\nonumber\\
& \lc & 		2^{k-N}2^{(k-j)d}\,2^{(N-k)d(1-\frac1p)_+}\,\Big[\sum_{\mu\in\La(x)}	
\big(\sum_{J\in\sD_{j+1}[\partial I_{N,\mu}]}\|f\|_{L^\infty(J)}\big)^p\Big]^\frac1p,
\label{A_aux1}\Eea
since $\card\,\La(x)\approx 2^{(N-k)d}$. Taking the $L^p$-quasinorm of the last bracketed expression gives
\begin{align}
&\Big[\int_{x\in\SRd}\sum_{\mu\in\La(x)}	
\big(\!\!\!\sum_{J\in\sD_{j+1}[\partial I_{N,\mu}]}\!\!\|f\|_{L^\infty(J)}\big)^p\,dx\Big]^\frac1p
\notag
\\
&\quad\quad\lc \Big[\sum_{I\in\sD_N} 2^{-kd}\big(\!\!\sum_{J\in\sD_{j+1}[\partial I]}\|f\|_{L^\infty(J)}\big)^p\Big]^\frac1p
\notag
\\
&
\quad\quad
\lc \; 2^{\frac{(j-k)d}p}\,2^{(j-N)(d-1)(1-\frac1p)_+}\, \big\|\fM^*_jf\big\|_{L^p(\SRd)}\, \quad \mbox{{\footnotesize by \eqref{jkN_aux2}}}.
\label{A_aux2}\end{align}
Therefore, combining exponents in \eqref{A_aux1} and \eqref{A_aux2} one obtains
\Bea
\|\cA_1\|_p & \lc & 
2^{k-N}2^{(k-j)d}\,2^{(N-k)d(1-\frac1p)_+}\,
2^{\frac{(j-k)d}p}\,2^{(j-N)(d-1)(1-\frac1p)_+}\,\|f\|_p\nonumber\\
& = & 2^{k-j}\,2^{\frac{j-N}p}\,2^{(N-k)(\frac1p-1)_+}\,2^{(j-k)(d-1)(\frac1p-1)_+}\,\|f\|_p.\label{A_aux3}
\Eea		
Finally, we estimate the term $\cA_2(x)$ in \eqref{AxBx}. First notice that
\[
|L_j(\tf)(y)|\leq\int_{\cU_{N,{j}}}|\beta_j(y-z)||f(z)|\,dz=0, \quad \text{if } y\in\cU_{N,{j-1}}^\complement,
\]      
since $\supp\beta_j(y-\cdot)\subset y+2^{-j}[-\frac12,\frac12]^d
\subset \cU_{N,{j}}^\complement$.		
Moreover, if $I\in\sD_N$, then for every cube $J\in\sD_j$ such that $J\subset I\cap\cU_{N,j-1}$ we have
\[
|L_j(\tf)(y)|\leq\int|\beta_j(z)||f(y-z)|\,dz\lesssim\|f\|_{L^\infty(J^*)},
\quad \text{if } y\in J
\]
where $J^*=J+2^{-j}[-\frac12,\frac12]^d$. Therefore,
\[
\int_I|L_j(\tf)(y)|\lesssim\sum_{J\in\sD_{j}[\partial I]}\|f\|_{L^\infty(J^*)}|J|,
\]
and overall we obtain\[
|\cA_2(x)|\lesssim 2^{(k-j)d}2^{k-N}\,\sum_{\mu\in\La(x)}
\sum_{J\in\sD_{j-1}[\partial I_{N,\mu}]}\|f\|_{L^\infty(J)}.
\]
But this is essentially the same expression we obtained in \eqref{A_aux1} for the term $|\cA_1(x)|$, so the same argument will give an estimate of 
$\|\cA_2\|_p$ in terms of the quantity in \eqref{A_aux3}.
This concludes the proof of 
 \eqref{LkENLjgeN} in the case $j\ge N+1$, $k\le N$.

Finally, concerning (iii) in Proposition \ref{LkENLjprop},
we remark that the previous proofs can easily be adapted
replacing the operators $\SE_N$ and $\SE_N^\perp$ by $T_N[\cdot,a]$,
 keeping in mind that $T_N[g,a]$ is now constant in cubes $I\in\sD_{N+1}$, and enjoys an additional cancellation, $\int_{I_{N,\mu}}T_N[g,a](x) dx=0$, which simplifies some of the previous steps. \qed

  \subsection*{Proof of Theorem \ref{expthm}, conclusion}
   It remains to prove inequalities \eqref{j>N} and \eqref{j<N}.
 By  the embedding properties for the sequence spaces $\ell^r$ it suffices to verify both inequalities for very small $r$, say
  \Be \label{r-reduction} r\le \min \{p,1\}.\Ee
  In view of the embedding $\ell^r\hookrightarrow \ell^1$ and Minkowski's inequality (in $L^{p/r}$)  it suffices then to prove
\Be \label{firststar}\sup_N\Big(\sum_{k=0}^\infty 2^{ksr} \sum_{j=N+1}^\infty \big\| 
   L_k  \bbE_N L_j\La_j f
\big\|_p^r\Big)^{1/r} \lc \sup_{j}2^{js} \|\Lambda_j f\|_p
\Ee
and
\Be\label{secondstar}\sup_N
\Big(\sum_{k=0}^\infty 2^{ksr} \sum_{j=0}^N \big\| 
 L_k  (\bbE_N^\perp L_j\La_j f )\big\|_p^r\Big)^{1/r} \lc 
\sup_{j}2^{js} \|\Lambda_j f\|_p\,.
\Ee

If we apply Proposition \ref{LkENLjprop} to  each of the functions $\La_jf$, we reduce matters to observe  that 
\Be
\sup_N \sum_{k=0}^\infty 2^{ksr} \sum_{j=0}^\infty \big[2^{-js}B(j,k,N)\big]^r< \infty, 
\label{CjkN}\Ee
with  $B(j,k,N)$ as in \eqref{BjkN}, and that  
\[ \Big(\sum_{j=N+1}^\infty \sum_{k=0}^N + \sum_{k=N+1}^\infty \sum_{j=0}^N\Big)  2^{-|j-k|(M-A)} <\infty \]
which is trivial. 
The verification of \eqref{CjkN} under the assumptions in \eqref{large2} is also elementary, but we  carry out some details to clarify
how the conditions on $p$ and $s$ are used.


When $j,k>N$, we have  $B(j,k,N)=2^{N-j}\,2^{\frac{j-k}p}\,
 2^{(j-N)(d-1)(\frac1p-1)_+}$ and thus
\begin{align} \notag &\sum_{k>N}2^{ksr} \sum_{j>N}\big[2^{-js}B(j,k,N)\big]^r
\\&= \Big(\sum_{k>N}2^{-kr(\frac1p-s)}\Big)\Big(\sum_{j>N}2^{-rj[s+1-\frac1p-(d-1)(\frac1p-1)_+]}\Big)\,2^{Nr[1-(d-1)(\frac1p-1)_+]},\label{sumCjkN} 
\end{align}
and the series converge provided $s<1/p$ and \Be
s>\tfrac1p-1 + (d-1)(\tfrac1p-1)_+\,=\, \max\Big\{d(\tfrac1p-1),\tfrac1p-1\Big\}.
\label{sgtr}\Ee
Further, being geometric sums, the final outcome in \eqref{sumCjkN} is bounded uniformly in $N$.

Next assume $j\leq N<k$, then  $B(j,k,N)=2^{\frac{N-k}p} 2^{j-N}$ and hence
\[
\sum_{k>N}2^{ksr} \sum_{j\leq N}\big[2^{-js}B(j,k,N)\big]^r = \Big(\sum_{k>N}2^{-kr(\frac1p-s)}\Big)\Big(\sum_{j\leq N}2^{rj(1-s)}\Big)\,2^{Nr(\frac1p-1)},\]
which are finite expressions provided $s<\min\{1,1/p\}$.

Consider $j,k\leq N$, with  $B(j,k,N)=2^{k-N} 2^{j-N}2^{(N-k)d(\frac1p-1)_+}$. Then
\[
\sum_{k\leq N}2^{ksr} \sum_{j\leq N}\big[2^{-js}B(j,k,N)\big]^r=\hskip7cm\]
\[ = \Big(\sum_{k\leq N}2^{kr[s+1-d(\frac1p-1)_+]}\Big)\Big(\sum_{j\leq N}2^{rj(1-s)}\Big)\,2^{-Nr[2-d(\frac1p-1)_+]},\]
which lead to uniform expressions in $N$ under the assumptions $s<1$ and 
\Be s>d(\tfrac1p-1)_+-1,\label{sgtr2}\Ee
the latter being weaker than \eqref{sgtr}.

When $k\leq N<j$ we have  $B(j,k,N)=2^{k-j+\frac{j-N}p+[N-k+(j-k)(d-1)](\frac1p-1)_+}$ and
\[
\sum_{k\leq N}2^{ksr} \sum_{j> N}\big[2^{-js}B(j,k,N)\big]^r=\hskip7cm\]
\[ = \Big(\sum_{k\leq N}2^{kr[s+1-d(\frac1p-1)_+]}\Big)\Big(\sum_{j> N}
2^{-rj[s+1-\frac1p-(d-1)(\frac1p-1)_+]}\Big)\,2^{-Nr[\frac1p-(\frac1p-1)_+]},\]
where in the first series we would use \eqref{sgtr2} and in the second series \eqref{sgtr}.
We have verified \eqref{CjkN} in all cases. This finishes the proof of Theorem \ref{expthm}. \qed

\section{Schauder bases}\label{schaudersect}
Let $P_N$ be defined as in \eqref{PNdef}. For the proof of Theorem \ref{schauder} we need to prove that $\|P_Nf-f\|_{F^s_{p,q}}\to 0$ for every $f\in F^s_{p,q}$,
with $(p,s)$ as in \eqref{large2} and $0<q<\infty$. We first discuss some preliminaries about localization and pointwise multiplication by characteristic functions of cubes, then prove uniform bounds for the $F^s_{p,q}\to F^s_{p,q} $ operator norms of the $P_N$ and then establish the asserted limiting property.

\subsection*{\it Preliminaries} 
For $\nu\in \bbZ^d$ let $\chi_\nu$ be the characteristic function of $\nu+[0,1)^d$.
\begin{lemma}\label{loc}
Assume that \Be\tfrac{d-1}d<p<\infty,\quad 0<q\leq \infty,\mand\max\{d(\tfrac1p-1),\tfrac1p-1\}<s<\tfrac1p.\label{range_mult}\Ee
Then, the following holds for all $g_\nu$ and $f\in F^s_{p,q}$: 
$$\Big\|\sum_{\nu \in \bbZ^d} \chi_\nu g_\nu\Big\|_{F^s_{p,q}} \lc \Big(\sum_{\nu \in \bbZ^d} 
\big\| g_\nu\big\|_{F^s_{p,q}}^p\Big)^{1/p}$$
and
$$\Big(\sum_{\nu \in \bbZ^d} 
\big\| f\chi _\nu\big\|_{F^s_{p,q}}^p\Big)^{1/p}\lc \|f\|_{F^s_{p,q}}\,.
$$
\end{lemma}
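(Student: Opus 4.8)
The plan is to prove both inequalities of Lemma \ref{loc} simultaneously by reducing them to a single quantitative statement about the pointwise multiplier $\chi_\nu$, namely that multiplication by $\chi_{[0,1)^d}$ (equivalently by $\bbone_Q$ for a unit dyadic cube $Q$) is bounded on $F^s_{p,q}(\SRd)$ in the range \eqref{range_mult}. This multiplier result is classical in the range $\max\{d(1/p-1),1/p-1\}<s<1/p$ — it is exactly the condition for $\bbone_Q$ to be a pointwise multiplier on $F^s_{p,q}$; see Triebel's monographs (e.g.\ \cite[\S2.8, \S4.6]{triebel2}) and the work of Frazier--Jawerth on the $\varphi$-transform. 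So I would first record: for each fixed $\nu$, $\|f\chi_\nu\|_{F^s_{p,q}}\lc \|f\|_{F^s_{p,q}}$, with constant independent of $\nu$ by translation invariance. This already looks almost like the second inequality, but one must upgrade the single-cube bound to an $\ell^p$-summed bound.

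The mechanism for that upgrade is a localization principle for $F^s_{p,q}$: since $0<s<1$ (more precisely $s<1/p\le$ something, and in any case the spaces here have low smoothness), the $F^s_{p,q}$ norm is essentially \emph{additive over a partition into unit cubes}. Concretely, I would invoke the equivalence
\[
\|f\|_{F^s_{p,q}}^p \;\approx\; \sum_{\nu\in\bbZ^d}\|f\,\widetilde\chi_\nu\|_{F^s_{p,q}}^p
\]
valid when $\widetilde\chi_\nu$ is a smooth partition of unity subordinate to a cover by (slightly enlarged) unit cubes, and in the stated parameter range also with $\widetilde\chi_\nu$ replaced by the sharp cutoff $\chi_\nu$ (this again uses that $\bbone_Q$ is a multiplier). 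Both directions of this equivalence are standard: the "$\lesssim$" direction gives the second claim once combined with the single-cube multiplier bound (apply it to each $f\chi_\nu$ and sum the $p$-th powers), and the "$\gtrsim$" direction, combined with the fact that $\chi_\nu g_\nu$ has support in $\nu+[0,1)^d$ so only boundedly many neighbors interact, gives the first claim: write $\sum_\nu\chi_\nu g_\nu$, localize its norm to unit cubes, and on each cube only the terms $\chi_{\nu'}g_{\nu'}$ with $\nu'$ a neighbor of $\nu$ contribute, so $\|(\sum_{\nu'}\chi_{\nu'}g_{\nu'})\widetilde\chi_\nu\|_{F^s_{p,q}}^p \lc \sum_{\nu'\sim\nu}\|g_{\nu'}\|_{F^s_{p,q}}^p$; summing over $\nu$ and using finite overlap yields the bound.

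The one technical subtlety — and the step I expect to be the main obstacle to present cleanly — is the passage from a \emph{smooth} partition of unity to the \emph{sharp} characteristic functions $\chi_\nu$, together with the $p<1$ case where Minkowski's inequality fails and one must be careful about the quasi-norm triangle inequality with its constants; here one exploits $r$-convexity of $F^s_{p,q}$ for $r=\min\{p,1\}$ and the finite overlap to absorb the constants. The sharp-cutoff step is precisely where the lower bound $s>\max\{d(1/p-1),1/p-1\}$ (characteristic function is a multiplier) and the upper bound $s<1/p$ ($\chi_\nu\in F^s_{p,q}$ locally, and traces on cube boundaries are controlled) both enter; without them the statement is false. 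I would therefore structure the proof as: (1) cite/state the pointwise-multiplier theorem for $\bbone_Q$ on $F^s_{p,q}$ in the range \eqref{range_mult}; (2) state the additive localization equivalence over unit cubes; (3) deduce the first inequality from finite overlap plus the $\gtrsim$ direction; (4) deduce the second inequality from the $\lesssim$ direction plus translation-invariance of the single-cube multiplier bound; (5) remark that when $p\ge 1$ everything also follows more directly by interpolation/duality from the $L^p$ and $\dot F^1_{p,q}$ endpoint cases, but the unified argument via localization covers the full range $d/(d+1)<p<\infty$ uniformly.
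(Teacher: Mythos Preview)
Your proposal is correct and takes essentially the same approach as the paper: both rely on the smooth localization equivalence $\|g\|_{F^s_{p,q}}^p \asymp \sum_\nu \|\varsigma_\nu g\|_{F^s_{p,q}}^p$ (which the paper cites as \cite[2.4.7]{triebel2}), the pointwise multiplier property of characteristic functions of cubes in the range \eqref{range_mult} (cited from \cite[Thm.\ 4.6.3/1]{RuSi96}), and a finite-overlap argument. The only organizational difference is that the paper never isolates a sharp-cutoff localization equivalence as a separate step; it applies the multiplier bound directly to the products $\varsigma_{\nu'}\chi_\nu$, which slightly streamlines the argument and sidesteps the ``smooth to sharp'' passage you flagged as a subtlety.
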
 
\begin{proof} Let $\varsigma\in C^\infty_c(\bbR^d)$ so that $\supp(\varsigma)\subset(-1,1)^d$ and 
$\sum_{\nu\in \SZd} \varsigma(x-\nu)=1$ for all $x\in \bbR^d$. Let $\varsigma_\nu=\varsigma(\cdot-\nu)$. We have, for all $s\in \bbR$,
\begin{equation}\label{2.4.7}
  \|g\|_{F^s_{p,q}} \asymp \Big(\sum_\nu \big\|  \varsigma_\nu g\big\|_{F^s_{p,q}}^p\Big)^{1/p}\,;
\end{equation}
see \cite[2.4.7]{triebel2}. Hence 
\begin{align*}
&\Big\|\sum_{\nu\in\SZd} \chi_\nu g_\nu\Big\|_{F^s_{p,q}} =
\Big\|\sum_{\nu'}\varsigma_{\nu'} \sum_\nu  \chi_\nu g_\nu\Big\|_{F^s_{p,q}}
 \lc \Big(\sum_{\nu'} 
 \Big\|\varsigma_{\nu'}\!\!\sum_{|\nu-\nu'|_\infty\leq1}   \chi_\nu g_\nu\Big\|_{F^s_{p,q}}^p \Big)^{1/p}
 \\
& \lc \Big(\sum_{\nu'} \sum_{|\nu-\nu'|_\infty\leq1}   
 \|
 g_\nu\|_{F^s_{p,q}}^p \Big)^{1/p}\lc
 \Big( \sum_{\nu}   
 \|
 g_\nu\|_{F^s_{p,q}}^p \Big)^{1/p}.
 \end{align*}
Here we have used that $ \varsigma_{\nu'}  \chi_\nu $ are pointwise multipliers of $F^s_{p,q}$, with uniform bounds in $(\nu, \nu')$, in the range given by \eqref{range_mult};
see \cite[Thm.\ 4.6.3/1]{RuSi96}. This proves the first inequality. 

For the second inequality we first observe that, by \eqref{2.4.7},
$$
   \|f\chi_{\nu}\|_{F^s_{p,q}} \lesssim \Big(\sum\limits_{\nu'} \|f\chi_{\nu} \varsigma_{\nu'}\|^p_{F^s_{p,q}}\Big)^{1/p}\,,\quad \nu \in \bbZ^d,
$$
which yields 
\begin{equation}\nonumber
 \begin{split}
     \Big(\sum\limits_{\nu}\|f\chi_{\nu}\|_{F^s_{p,q}}^p)^{1/p} &\lesssim \Big(\sum\limits_{\nu}\sum\limits_{\nu'} \|f\chi_{\nu} \varsigma_{\nu'}\|^p_{F^s_{p,q}}\Big)^{1/p}\\
     &\lesssim \Big(\sum\limits_{\nu'}\sum\limits_{|\nu-\nu'|_\infty\leq 1} \|f\chi_{\nu} \varsigma_{\nu'}\|^p_{F^s_{p,q}}\Big)^{1/p}\\
     &\lesssim \Big(\sum\limits_{\nu'}\|f\varsigma_{\nu'}\|^p_{F^s_{p,q}}\Big)^{1/p}\lesssim \|f\|_{F^s_{p,q}}\,,
 \end{split}
\end{equation}
where we have used 
 the pointwise multiplier assertion \cite[Thm.\ 4.6.3/1]{RuSi96} and then again 
\eqref{2.4.7}  in the last step.
\end{proof}

\subsection*{\it Uniform boundedness of the $P_N$} 
Observe that by the localization property of the Haar functions we have
$P_Nf = \sum_{\nu\in\SZd}  \chi_\nu P_N f= \sum_{\nu} \chi_\nu P_N [f\chi_\nu].$
Thus by Lemma \ref{loc} 
$$
\|P_N f\|_{F^s_{p,q}} \lc 
\Big(\sum_\nu 
\big\| P_N[f\chi_\nu]\big\|_{F^s_{p,q}}^p\Big)^{1/p}\,.$$
Since the enumeration of the Haar system is assumed to be admissible we have
\Be \label{PNdec}
P_N[f\chi_\nu]= \bbE_{N_\nu} [f\chi_\nu]+ T_{N_\nu}[f\chi_\nu, a^{N,\nu}]
\Ee
for some $N_\nu\in \bbN$, with $N_\nu\le N$ and appropriate  sequences $a^{N,\nu}$ assuming only the values $1$ and $0$. We remark that for each $\nu$, $N_\nu=N_\nu(N)$ with
\Be\label{limitNnu} \lim_{N\to\infty} N_\nu(N)=\infty\,.\Ee

By Theorem \ref{expthm} 
\begin{align*}
&\Big(\sum_\nu 
\big\| P_N[f\chi_\nu]\big\|_{F^s_{p,q}}^p\Big)^{1/p}
\\ 
\lc
&\Big(\sum_\nu 
\big\|    \bbE_{N_\nu} [f\chi_\nu]       \big\|_{F^s_{p,q}}^p\Big)^{1/p}
+\Big(\sum_\nu 
\big\|    T_{N_\nu}[f\chi_\nu, a^{N,\nu}]     \big\|_{F^s_{p,q}}^p\Big)^{1/p}
\\
 \lc &
\Big(\sum_\nu 
\big\| f\chi_\nu\big\|_{F^s_{p,q}}^p\Big)^{1/p} \,\lc \, \|f\|_{F^s_{p,q}}\,,
\end{align*}
where for the last inequality we have used 
 Lemma \ref{loc} again.

\begin{proof}[Proof of Theorem \ref{schauder}, conclusion]
Let $f\in F^s_{p,q}$, with $(p,s)$ as in \eqref{large2} and $0<q<\infty$.  
Let $C=\max\{ 1, \sup_N\|P_N\|_{F^s_{p,q}\to F^s_{p,q}} \}.$
Since Schwartz functions are dense in $F^s_{p,q}$ when $0<p,q<\infty$  there is $\tilde f\in \cS(\bbR)$ such that 
$\|f-\tilde f\|_{F^s_{p,q}}<(3C)^{-1}\ep$ and hence $\|P_N f-P_N \tilde f\|_{F^s_{p,q}}< \ep/3$. Choose $s_1$ so that $s<s_1<\max\{1/p,1\}$ then 
$\tilde{f} \in B^{s_1}_{p,q} \hookrightarrow F^s_{p,q}$. Since the Haar system is an unconditional basis on $B^{s_1}_{p,q}$ (\cite{triebel-bases}) we have
$\lim_{N\to\infty} \|P_N\tilde f-\tilde f\|_{B^{s_1}_{p,q}}=0$ and therefore $\lim_{N\to\infty} \|P_N\tilde f-\tilde f\|_{F^{s}_{p,q}}=0$. Combining these facts we get $\|P_Nf -f\|_{F^s_{p,q}}<\ep$ for sufficiently large $N$ which shows that 
 $P_N f\to f$ in $F^s_{p,q}$\,.
 \end{proof}

\section{Optimality away from the end-points}
\label{optimal}

\begin{proposition}\label{P4}
Let $0<q<\infty$. Then, the Haar system $\sH_d$ is not a Schauder basis of $F^s_{p,q}(\SRd)$ in each of the following cases:

\sline (i) if $1<p<\infty$ and $s\geq 1/p$ or $s\leq 1/p-1$,

\sline (ii) if $d/(d+1)\leq p\leq 1$ and $s>1$ or $s< d(1/p-1)$,

\sline (iii) if $0<p<d/(d+1)$ and $s\in\SR$.
\end{proposition}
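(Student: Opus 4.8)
The plan is to prove the negative results by combining three ingredients: the necessity of Haar functions belonging to $F^s_{p,q}$, a duality argument, and an interpolation/self-improvement argument. First, for statement (i): the condition $s<1/p$ is \emph{necessary} simply because each Haar function $h^{(\bfe)}_{k,\ell}$ must lie in $F^s_{p,q}$, and a direct Littlewood--Paley computation (or the known characterization of when indicator functions of cubes lie in $F^s_{p,q}$, cf.\ the discussion in \cite{su}) shows $\bbone_{[0,1)^d}\in F^s_{p,q}$ iff $s<1/p$ (with the subtlety at $s=1/p$, $q$ arbitrary, handled by the failure of the trace-type estimate). Hence if $s\geq 1/p$ the system is not even contained in the space, so it cannot be a Schauder basis. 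For the endpoint $s\le 1/p-1$ with $1<p<\infty$, I would invoke duality: $(F^s_{p,q})^* = F^{-s}_{p',q'}$, and a Schauder basis property for $\sH_d$ in $F^s_{p,q}$ would force (via the transpose partial-sum operators, which for the \emph{admissible} enumeration are again associated to dyadic averaging/Haar-projection operators) uniform boundedness of analogous projections on $F^{-s}_{p',q'}$, i.e.\ the Haar system would be a Schauder basis there too; but $-s\ge 1-1/p = 1/p'$, contradicting the already-established necessity $-s<1/p'$. One must be slightly careful that the dual statement genuinely concerns the Haar system and not merely its biorthogonal system, but since the Haar system is (up to normalization) self-biorthogonal this goes through.

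For statement (ii), the range $d/(d+1)\le p\le 1$: again $s<1$ is needed because for $p\le 1$ one checks $\bbone_{[0,1)^d}\notin F^s_{p,q}$ once $s\ge 1$ (the function has a jump, and for $p\le 1$, $F^s_{p,q}\hookrightarrow$ a space of genuine smoothness strictly beyond what a jump allows; concretely $\sup_k 2^{ks}\|\beta_k * \bbone_Q\|_p$ diverges for $s\ge 1$, $p\le 1$, using that $\beta_k*\bbone_Q$ is supported in a $2^{-k}$-neighborhood of $\partial Q$ of measure $\sim 2^{-k}$ and is of size $O(1)$ near the boundary). For the lower endpoint $s< d(1/p-1)$, I would argue by the interpolation/embedding scheme already alluded to in the paper: suppose, toward a contradiction, that $\sH_d$ were a Schauder basis on $F^{s_0}_{p,q}$ for some $s_0< d(1/p-1)$ with $p\le 1$. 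Interpolating (real or complex interpolation for $F$-spaces, Triebel \cite{triebel2}) the uniform bounds $\sup_N\|P_N\|_{F^{s_0}_{p,q}\to F^{s_0}_{p,q}}<\infty$ against a point in the known good range (where $\sup_N\|P_N\|<\infty$ holds by Theorem~\ref{schauder} / Corollary~\ref{uniformbdcor}) would yield uniform $P_N$-bounds at parameters $(\tilde p,\tilde s)$ with $\tilde p>1$ and $\tilde s\le 1/\tilde p -1$, contradicting case (i). Alternatively, and perhaps more cleanly, use that for $p\le 1$ the line $s=d(1/p-1)$ is the Sobolev embedding line into $L^1_{\loc}$; below it, $F^s_{p,q}$ contains distributions that are not locally integrable, so the projections $P_N$ (which involve integration $\inn{f}{u_n}$ against bounded compactly supported functions) are not even densely defined in a way compatible with $\lim_N\|P_Nf-f\|=0$; making this precise — exhibiting an explicit $f\in F^{s_0}_{p,q}$ for which $\bbE_N f$ fails to converge, e.g.\ a lacunary sum $f=\sum_m c_m \beta_{k_m}*\delta_0$-type object concentrated at a point — is the technical heart of this part.

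For statement (iii), $0<p<d/(d+1)$: here $s< d(1/p-1)\le \text{(something}\le 0)$ and simultaneously one cannot have $s>d(1/p-1)$ and $s<1/p$ compatible with membership; more to the point, I would show that for such $p$ the single function $\bbone_{[0,1)^d}$ fails to lie in $F^s_{p,q}$ for \emph{every} $s$ for which the basis question is meaningful — indeed the constraint from the jump forces $s<1/p$, but the constraint from $\bbone_{[0,1)^d}$ being the sum of its Haar expansion (which in the $F$-norm requires control of $\sum 2^{ksr}\|\beta_k*\bbone_Q\|_p^r$) forces $s> d(1/p-1) - $ (correction), and when $p<d/(d+1)$ these two constraints are incompatible (the gap $d(1/p-1) \ge 1/p$ precisely when $p\le d/(d+1)$, with equality at $p=d/(d+1)$). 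Hence $\sH_d\not\subset F^s_{p,q}$ for any $s$, so it is never a Schauder basis.

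\textbf{Main obstacle.} I expect the genuinely delicate point to be the lower-endpoint cases $s\le 1/p-1$ in (i) and $s\le d(1/p-1)$ in (ii): the membership-failure arguments dispatch the "too smooth" endpoints almost immediately, but the "too rough" endpoints require either a careful duality bookkeeping (ensuring the transposed partial sums for an admissible enumeration are themselves of the form $\bbE_{N}$ or $T_N[\cdot,a]$, then reusing necessity of $s>1/p-1$) or an explicit divergence example; constructing the latter robustly across all admissible enumerations — rather than for one convenient enumeration — is where the real work lies, and I would lean on the duality route to sidestep it where possible, falling back to the self-improvement/interpolation argument to push the contradiction into the already-excluded region of case (i).
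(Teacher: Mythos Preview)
Your treatment of case (i) is correct and matches the paper's approach: membership of the Haar functions forces $s<1/p$, and duality (with the Haar system being self-biorthogonal up to normalization) then forces $s>1/p-1$.

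However, there is a genuine gap in your handling of cases (ii) and (iii). Your central claim --- that $\bbone_{[0,1)^d}\notin F^s_{p,q}$ once $s\ge 1$ and $p\le 1$ --- is \emph{false}. Your own computation shows $\|\beta_k*\bbone_Q\|_p\lesssim 2^{-k/p}$, so $2^{ks}\|\beta_k*\bbone_Q\|_p\lesssim 2^{k(s-1/p)}$, which is summable precisely when $s<1/p$; for $p<1$ this range includes values $s>1$. Thus in the strip $1<s<1/p$ (nonempty when $p<1$) the Haar functions \emph{do} lie in $F^s_{p,q}$, yet the Schauder basis property still fails --- not for membership reasons, but because the linear span of $\sH_d$ is not dense (this is Triebel's result, and the paper establishes it quantitatively in Proposition~\ref{P5}). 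Your argument for case (iii) inherits the same error, and additionally the inequality you invoke is miscomputed: one has $d(1/p-1)\ge 1/p$ iff $p\le (d-1)/d$, not $p\le d/(d+1)$; the relevant threshold $d/(d+1)$ arises instead from comparing $d(1/p-1)$ with $1$, and this has nothing to do with membership of $\bbone_Q$.

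The paper's route is different and sidesteps these issues entirely. Rather than arguing directly, it appeals to Triebel's already-established negative result for the Besov scale and transfers it to $F^s_{p,q}$ via the real interpolation identity
\[
\big(F^{s_0}_{p,q},\,F^{s}_{p,q}\big)_{\theta,q}\,=\,B^{s_\theta}_{p,q},\qquad s_\theta=(1-\theta)s_0+\theta s,
\]
with $p,q$ fixed. If $\sH_d$ were a Schauder basis at a ``bad'' point $(p,s)$ and is (by Theorem~\ref{schauder}) a Schauder basis at a nearby ``good'' point $(p,s_0)$, then it would be a Schauder basis of $B^{s_\theta}_{p,q}$; taking $\theta$ close to $1$ places $s_\theta$ outside Triebel's Besov range, a contradiction. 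Your interpolation idea for the lower endpoint of (ii) --- interpolating toward the already-excluded region of case~(i) --- is in the right spirit, but as written it confuses interpolation with extrapolation and the geometric claim that the interpolation segment must enter the case~(i) bad region is not justified; the paper's reduction to Besov spaces with the \emph{same} $p$ avoids this difficulty.
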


The same result for the spaces $B^s_{p,q}(\SRd)$ was proved by Triebel in \cite{triebel78}; 
see also \cite[Proposition 2.24]{triebel-bases}. Proposition \ref{P4} can be obtained from this and Theorem \ref{schauder} by suitable interpolation. 

Indeed, assertion (i) was already discussed in the paragraph following \eqref{limit}, so we restrict to $p\leq1$.
Assume next that $\sH_d$ is a basis for $F^s_{p,q}$ for some $d/(d+1)<p<1$ and $s>1$ or $s<d(1/p-1)$. 
By Theorem \ref{schauder}, $\sH_d$ is also a basis for $F^{s_0}_{p,q}$, for any $d(1/p-1)<s_0<1$.
By real interpolation, see e.g. \cite[Thm.\ 2.4.2(ii)]{Tr83}, for all $0<\theta<1$, the system $\sH_d$ will then be a basis of \[
\big(F^{s_0}_{p,q},F^{s}_{p,q}\big)_{\theta,q}=B^{s_\theta}_{p,q},\quad\mbox{with }s_\theta=(1-\theta)s_0+\theta s.\]
But when $\theta$ is close to 1 this would contradict Triebel's result. 
The remaining cases, $p=1$ and $p\geq d/(d+1)$ can be proved similarly using complex interpolation of $F$-spaces; see \cite[1.6.7]{triebel2}. 

We remark that, in the paper \cite{triebel78}, the failure of the Schauder basis property 
in the $B$-spaces is sometimes due to the fact that span$\,\sH_d$ fails to be dense in $B^s_{p,q}$. 
This is the case, for instance, in the region
\Be \label{dense}
(d-1)/d<p<1\mand \max\big\{1,d(1/p-1)\big\}<s<1/p;
\Ee
see \cite[Corollary 2]{triebel78}.  Here we show that also a quantitative bound holds, therefore ruling out the possibility that $\sH_d$ could be a basic sequence. 

\begin{proposition}\label{P5}
Let $0<q\leq \infty$, and $(p,s)$ be as in \eqref{dense}. Then, 
\[
\|\SE_N\|_{B^s_{p,q} \to B^s_{p,q}}\gtrsim 2^{(s-1)N}.
 \]
\end{proposition}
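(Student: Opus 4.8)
The strategy is to exhibit a single test function $f$ (independent of $N$, up to normalization) for which $\|\SE_N f\|_{B^s_{p,q}} \gtrsim 2^{(s-1)N} \|f\|_{B^s_{p,q}}$. The natural candidate, given that the obstruction in the range \eqref{dense} comes from the discontinuity of Haar-type approximations, is a smooth bump, say $f \in C_c^\infty$ with $f \equiv 1$ on the unit cube $[0,1]^d$; then $\|f\|_{B^s_{p,q}} \asymp 1$. The point is that $\SE_N f$ is constant on each dyadic cube of side $2^{-N}$, hence \emph{locally} a step function, and a step function with jumps of size $\asymp 1$ across the boundaries of cubes in $\sD_N$ has large $B^s_{p,q}$ norm when $s$ is close to (but below) $1/p$ and in particular when $s>1$, where no nonconstant piecewise-constant function can even lie in $B^s_{p,q}$; the blow-up rate is precisely $2^{(s-1)N}$.

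The key steps, in order: (1) Normalize $f$ so $\|f\|_{B^s_{p,q}}\asymp 1$ and observe $\SE_N f$ is piecewise constant on $\sD_N$ and agrees with $1$ on $[0,1]^d$, while it takes a value bounded away from $1$ on an adjacent cube outside $\supp f$; so $\SE_N f$ has a jump of size $\gtrsim 1$ across a portion of $\partial([0,1]^d)$ of $(d-1)$-measure $\asymp 1$. (2) Lower-bound $\|\SE_N f\|_{B^s_{p,q}}$ from below by a single Littlewood--Paley (or local-means) piece at frequency scale $2^k$ for a well-chosen $k\ge N$: compute $L_k(\SE_N f)$ and show $\|L_k(\SE_N f)\|_p \gtrsim 2^{-k/p}\,2^{N(d-1)/p}$, coming from the fact that $L_k(\SE_N f)$ is supported in (and of size $\asymp 1$ on a fixed fraction of) the $2^{-k}$-neighborhood $\cU_{N,k}$ of the jump set, which has measure $\asymp 2^{-k}\cdot 2^{-N(d-1)}$ (restricting attention to the piece of the jump set near $\partial[0,1]^d$, which is an $N$-independent region). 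Actually more care is needed: I want the contribution at \emph{each} dyadic frequency $k>N$; summing $2^{ksq}\|L_k \SE_N f\|_p^q$ over $k>N$ gives $\sum_{k>N} 2^{ksq} 2^{-kq/p} \asymp 2^{Nq(s-1/p)}$ (since $s<1/p$ makes this a convergent geometric tail dominated by its first term), and the extra factor $2^{N(d-1)/p \cdot q}$ from the measure of $\cU_{N,k}$ combines to give $2^{Nq(s-1/p)}\,2^{Nq(d-1)/p} = 2^{Nq(s-1+ d/p - 1/p \cdot 0 \dots)}$ — here I need to recheck the bookkeeping: the measure of $\cU_{N,k}\cap(\text{fixed region})$ is $\asymp 2^{-(k-N)}\cdot 2^{-N(d-1)}\cdot 2^{N d}$? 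No: it is $\asymp 2^{-(k-N)}$ times a fixed number, since near a single fixed $(d-1)$-face the neighborhood has thickness $2^{-k}$ and fixed cross-section. Then $\|L_k \SE_N f\|_p^p \asymp 2^{-(k-N)}$, so $\sum_{k>N} 2^{ksp} 2^{-(k-N)} \asymp 2^{N} \sum_{k>N} 2^{k(sp-1)} \asymp 2^N \cdot 2^{N(sp-1)} = 2^{Nsp}$ when $sp>1$, i.e. $s>1/p$ — but we are in the regime $s<1/p$. So the dominant $k$ is $k=N$ (or $k\asymp N$), giving $\|L_N \SE_N f\|_p \asymp 1$ on a fixed region and hence $\|\SE_N f\|_{B^s_{p,q}}\gtrsim 2^{Ns}$. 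That overshoots; the true statement is $2^{(s-1)N}$, so the jump is not of size $1$ but the relevant comparison must account for $\|f\|_{B^s_{p,q}}$ being measured differently — rather, one should not normalize but instead compare $\SE_N f$ with $f$ directly, and the factor $2^{-N}$ comes from $s>1$: a Lipschitz function has $\|\SE_N f - f\|_\infty \lesssim 2^{-N}$ but this is about the smooth part; the jump part is genuinely size $2^{-N}\cdot(\text{something})$. Let me restate cleanly below.

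(3) The cleanest route: take $f$ fixed smooth, and decompose $\SE_N f = (\SE_N f - f) + f$. On each cube $I\in\sD_N$ meeting $\supp\nabla f$, $\SE_N f - f$ has oscillation $\asymp 2^{-N}$ across $\partial I$ — i.e. $\SE_N f$, as a step function, jumps by $\asymp 2^{-N}\cdot 2^{N}\cdot 2^{-N} = $, hmm; rather, consecutive cube-averages of a $C^1$ function differ by $\asymp 2^{-N}\|\nabla f\|_\infty$. So $\SE_N f$ is a step function with jumps of size $\asymp 2^{-N}$ along $\partial\sD_N$ within $\supp\nabla f$. Then $\|L_k \SE_N f\|_p \asymp 2^{-N}\cdot 2^{-(k-N)/p}$ for $k>N$ (size $2^{-N}$ on a set of measure $2^{-(k-N)}$ near a fixed face), so
$$\|\SE_N f\|_{B^s_{p,q}}^q \gtrsim \sum_{k>N} 2^{ksq}\big(2^{-N}2^{-(k-N)/p}\big)^q \asymp 2^{-Nq}2^{Nq/p}\sum_{k>N}2^{kq(s-1/p)}\asymp 2^{-Nq}2^{Nq/p}2^{Nq(s-1/p)} = 2^{Nq(s-1)},$$
using $s<1/p$ so the geometric series is summable with first term dominating. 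This gives exactly $\|\SE_N f\|_{B^s_{p,q}}\gtrsim 2^{(s-1)N}$, and since $\|f\|_{B^s_{p,q}}\asymp 1$, we conclude $\|\SE_N\|_{B^s_{p,q}\to B^s_{p,q}}\gtrsim 2^{(s-1)N}$.

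\textbf{Main obstacle.} The delicate point is the lower bound $\|L_k(\SE_N f)\|_p \gtrsim 2^{-N}2^{-(k-N)/p}$ for $k>N$: one must show that the local-means kernel $\beta_k*(\SE_N f)$ does not exhibit cancellation that kills the jump contribution. This is handled by localizing to one fixed $(d-1)$-dimensional face $F$ of a cube $I_0\in\sD_N$ lying inside $\supp\nabla f$ (an $N$-independent choice once $f$ is fixed, e.g.\ a face of $[0,1]^d$ if $f\equiv 1$ there and $f$ drops off just outside), on one side of which $\SE_N f$ is $\asymp 1$ and on the other $\asymp 1 - c2^{-N}\cdot(\text{no})$ — I must be careful: if $f\equiv 1$ on $[0,1]^d$ then $\SE_N f \equiv 1$ there with \emph{no} jump; I should instead pick $f$ so that $\nabla f \neq 0$ on a cube well inside where $f$ is smooth, so that the step function $\SE_N f$ has genuine $\asymp 2^{-N}$ jumps there. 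Near such an $F$, $\SE_N f$ restricted to the two adjacent $\sD_N$-cubes is two constants differing by $c2^{-N}$ with $c\asymp |\partial_\nu f|>0$; then $\beta_k*(\SE_N f)$ near $F$ equals (difference of two constants)$\times(\text{partial integral of }\beta_k)$ plus negligible terms from farther cubes (which contribute $O(2^{-N}\cdot 2^{-k(M)})$ by the vanishing moments of $\beta$ and are summable to something smaller). One then integrates $|L_k \SE_N f|^p$ over the slab $\{\dist(x,F)\le c2^{-k}\}\cap(\text{fixed ball})$, whose measure is $\asymp 2^{-k}$, to get the claimed lower bound — with a careful choice of the argument of $x$ (e.g. on a positive-measure subset of the slab the partial integral of $\beta_k$ is bounded below in absolute value, using that $\int\beta_k\neq 0$ on half-spaces generically). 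This step is routine but requires attention; everything else (the geometric summation, the equivalence $\|f\|_{B^s_{p,q}}\asymp 1$, the reduction to a single face) is bookkeeping of the kind already carried out in \S\ref{expthmsect}.
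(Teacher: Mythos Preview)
Your overall strategy is sound and the key estimate $\|L_k \SE_N f\|_p \gtrsim 2^{-N} 2^{-(k-N)/p}$ for $k>N$ that you state in step~(3) is correct, but the justification you sketch in the ``Main obstacle'' paragraph does not deliver it. Localizing to a \emph{single} face $F$ --- whether a face of some $I_0\in\sD_N$ or, as your parenthetical suggests, a face of $[0,1]^d$ --- yields at best $\|L_k \SE_N f\|_{L^p(\text{slab near } F)} \gtrsim 2^{-N} 2^{-k/p}$, since the slab $\{\dist(x,F)\le c2^{-k}\}$ has measure $\lesssim 2^{-k}$ and the jump has size $\asymp 2^{-N}$. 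Summing $2^{ksq}(2^{-N}2^{-k/p})^q$ over $k>N$ then gives only $2^{Nq(s-1-1/p)}$, which is too weak by the factor $2^{-Nq/p}$. The missing $2^{N/p}$ in $\|L_k\SE_Nf\|_p$ comes from summing over \emph{all} of the $\asymp 2^N$ parallel hyperplanes $\{x_1 = m2^{-N}\}$, $0\le m\le 2^N$, inside the fixed region where $\partial_1 f\neq 0$: since $\beta_k$ has support of diameter $<2^{-N}$, the corresponding slabs are pairwise disjoint and each contributes $\gtrsim (2^{-N})^p 2^{-k}$ to $\|L_k\SE_Nf\|_p^p$, for a total $\gtrsim 2^N(2^{-N})^p2^{-k}$. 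So the measure $2^{-(k-N)}$ you quote in step~(3) is right, but only after this summation --- not ``near a fixed face.''

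The paper's argument is different and sidesteps this issue entirely. It takes the explicit test function $f(x)=x_1\,\eta(x)$ with $\eta\in C^\infty_c$, $\eta\equiv 1$ on $[-2,2]^d$, so that on $[0,1)^d$ one has the exact staircase $\SE_N f=\sum_{0\le k<2^N}\tfrac{k+1/2}{2^N}\,\bbone_{[k/2^N,(k+1)/2^N)\times[0,1)^{d-1}}$. It then uses the \emph{difference} characterization of $B^s_{p,q}$ (valid here since $\sigma_p<s<2$) rather than local means: $\Delta^2_{he_1}(\SE_N f)$ is computed explicitly for $0<h<2^{-N-1}$ and has size exactly $2^{-N-1}$ on a set of measure $\asymp 2^N h$, so $\|\Delta^2_{he_1}\SE_N f\|_{L^p([0,1]^d)}= 2^{(N+1)(1/p-1)}h^{1/p}$, and integrating against $h^{-sq}\,dh/h$ over $(0,2^{-N-1})$ gives exactly $2^{(s-1)Nq}$. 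This makes the role of all $2^N$ jump hyperplanes manifest from the start and avoids any non-cancellation argument for $\beta_k$ convolved with a step function.
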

\begin{proof}
Let $\eta\in C^\infty_c(\SR^d)$ such that $\eta\equiv1$ on $[-2,2]^d$, and consider the Schwartz function $f(x)=x_1\,\eta(x)$. It suffices to show that
\Be
\big\|\SE_Nf\big\|_{B^s_{p,q}}\gtrsim 2^{(s-1)N}.
\label{P5_aux0}\Ee
Under \eqref{dense} we have $s>\sigma_p:=d(1/p-1)_+$. Assume first that $s<2$ (which is always the case if $d>1$).
Then we can use the equivalence of quasi-norms
\[
 \|g\|_{{B^s_{p,q}(\mathbb{R}^d)}}\approx \|g\|_p+\sum_{j=1}^d\Big(\int_0^1\frac{\|\Delta^2_{he_j}g\|^q_p}{h^{sq}}\,\frac{dh}h\Big)^{1/q}\,,\]
with the usual modification in the case $q=\infty$, 
see \cite[2.6.1]{triebel2}. In particular 
\Be
\big\|\SE_Nf\big\|_{B^s_{p,q}}\gtrsim \Bigg(\int_0^{2^{-N-1}}\frac{\big\|\Delta^2_{he_1}\big(\SE_Nf\big)\big\|^q_{L^p([0,1]^d)}}{h^{sq}}\,\frac{dh}h\Bigg)^{1/q}.\label{P5_aux}\Ee
Now, it is easily checked that, when $x\in[0,1)^d$, one has \[
\SE_Nf=\sum_{0\leq k<2^N}\tfrac{k+1/2}{2^N}\bbone_{[\frac{k}{2^N},\frac{k+1}{2^N})\times [0,1)^{d-1}},\]
and likewise, if we additionally assume $0<h<2^{-N-1}$, then \[
\Delta_{he_1}\big(\SE_Nf\big)=2^{-N-1}\sum_{k=1}^{2^N}\bbone_{[\frac{k}{2^N}-h,\frac{k}{2^N})\times [0,1)^{d-1}}.
\] 
and
\[
\Delta^2_{he_1}\big(\SE_Nf\big)=2^{-N-1}\sum_{k=1}^{2^N}\Big[\bbone_{[\frac{k}{2^N}-2h,\frac{k}{2^N}-h)\times [0,1)^{d-1}}-\bbone_{[\frac{k}{2^N}-h,\frac{k}{2^N})\times [0,1)^{d-1}}\Big].
\]
Therefore, 
\[
\|\Delta^2_{he_1}\SE_Nf\|_{L^p([0,1]^d)}= 2^{(N+1)(1/p-1)}\,h^{1/p},
\]
which, inserted into \eqref{P5_aux}, gives \eqref{P5_aux0}. If $d=1$ and $s\geq 2$, one applies a similar argument to the functions $\Delta^L_{he_1}(\SE_Nf)$ with $L=\lfloor s\rfloor +1$ and $h<2^{-N}/L$.
\end{proof}

By interpolation one obtains as well a quantitative bound for the relevant cases in Proposition \ref{P4}(ii).
\begin{corollary}\label{CP5}
Let $0<q\leq \infty$, $d/(d+1)<p<1$ and $1<s<1/p$. Then, for all $\e>0$, 
\Be
\|\SE_N\|_{F^s_{p,q} \to F^s_{p,q}}\gtrsim c_\e\,2^{(s-1-\e)N}.
 \label{CP5_aux}\Ee
\end{corollary}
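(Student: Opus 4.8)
The plan is to argue by interpolation, combining three ingredients: the uniform bound of Corollary~\ref{uniformbdcor} at a low smoothness level, the lower bound of Proposition~\ref{P5} in Besov spaces, and the real interpolation identity $(F^{s_0}_{p,q},F^{s}_{p,q})_{\theta,q}=B^{s_\theta}_{p,q}$ already invoked in the discussion of Proposition~\ref{P4}. First I would fix a smoothness parameter $s_0$ with $d(1/p-1)<s_0<1$; such $s_0$ exists precisely because $p>d/(d+1)$, and since $p<1$ the pair $(p,s_0)$ lies in the range~\eqref{large2}, so Corollary~\ref{uniformbdcor} yields $\sup_N\|\SE_N\|_{F^{s_0}_{p,q}\to F^{s_0}_{p,q}}\le C<\infty$. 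For each fixed $N$ the operator $\SE_N$ is also bounded on $F^{s}_{p,q}$ — this can be seen from Lemma~\ref{loc} together with the fact that $\SE_N g$ is a step function at scale $2^{-N}$ — so $A_N:=\|\SE_N\|_{F^{s}_{p,q}\to F^{s}_{p,q}}<\infty$ (if some $A_N$ were infinite there would be nothing to prove).

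Next, for $0<\theta<1$ put $s_\theta=(1-\theta)s_0+\theta s$ and use the identity $(F^{s_0}_{p,q},F^{s}_{p,q})_{\theta,q}=B^{s_\theta}_{p,q}$, see \cite[Thm.~2.4.2(ii)]{Tr83}. Applying the interpolation property of the real method to $\SE_N$, which is bounded on both endpoint spaces, gives
\[
\|\SE_N\|_{B^{s_\theta}_{p,q}\to B^{s_\theta}_{p,q}}\lc C^{1-\theta}A_N^\theta.
\]
Now if $\theta$ is taken in the interval $\big((1-s_0)/(s-s_0),\,1\big)$, then $1<s_\theta<s<1/p$, so $(p,s_\theta)$ lies in the region~\eqref{dense}, and Proposition~\ref{P5} provides $\|\SE_N\|_{B^{s_\theta}_{p,q}\to B^{s_\theta}_{p,q}}\gtrsim 2^{(s_\theta-1)N}$. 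Combining the two bounds and solving for $A_N$,
\[
A_N\;\gtrsim\; C^{-(1-\theta)/\theta}\,2^{(s_\theta-1)N/\theta}.
\]

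To finish, I would use the elementary identity $(s_\theta-1)/\theta=(s-1)-(1-\theta)(1-s_0)/\theta$: given $\e>0$, choose $\theta$ close enough to $1$ (and $>(1-s_0)/(s-s_0)$) so that $(1-\theta)(1-s_0)/\theta<\e$; this forces $A_N\gtrsim c_\e\,2^{(s-1-\e)N}$, where $c_\e:=C^{-(1-\theta)/\theta}>0$ depends only on $\e$ (and on $p,s$), which is the asserted bound. I do not anticipate a genuine obstacle: the only points needing care are that $\SE_N$ is a bounded operator on $F^{s}_{p,q}$ for each fixed $N$ (so that the interpolation property may be applied at the second endpoint) and that the interpolation identity and operator-norm inequality are valid in the quasi-Banach range $p<1$ — both standard and already used in this paper for Proposition~\ref{P4}.

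Finally, I note that the $\e$-loss can in fact be avoided altogether: since $(p,s)$ itself lies in the region~\eqref{dense}, Proposition~\ref{P5} with $q=\infty$, applied to the test function $f(x)=x_1\eta(x)$ used there, gives $\|\SE_Nf\|_{B^{s}_{p,\infty}}\gtrsim 2^{(s-1)N}$, and the trivial pointwise embedding $F^{s}_{p,q}\hookrightarrow B^{s}_{p,\infty}$ then yields $\|\SE_N\|_{F^{s}_{p,q}\to F^{s}_{p,q}}\ge\|\SE_Nf\|_{B^{s}_{p,\infty}}/\|f\|_{F^{s}_{p,q}}\gtrsim 2^{(s-1)N}$. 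The interpolation argument above is presented for consistency with the method used for Proposition~\ref{P4}.
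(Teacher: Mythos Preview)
Your main argument is correct and is essentially the paper's own proof: fix $s_0\in(d(1/p-1),1)$, invoke Corollary~\ref{uniformbdcor} at $s_0$, apply the real interpolation identity $(F^{s_0}_{p,q},F^{s}_{p,q})_{\theta,q}=B^{s_\theta}_{p,q}$ together with Proposition~\ref{P5}, and let $\theta\to1$. The extra care you take (checking $\theta>(1-s_0)/(s-s_0)$ so that $s_\theta>1$, and noting that $\bbE_N$ is bounded on $F^s_{p,q}$ for each fixed $N$ so the interpolation inequality applies) is appropriate and matches what the paper leaves implicit.

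Your final remark goes beyond the paper. Since $(p,s)$ itself lies in the region~\eqref{dense}, the test function $f(x)=x_1\eta(x)$ from the proof of Proposition~\ref{P5} satisfies $\|\bbE_Nf\|_{B^s_{p,\infty}}\gtrsim 2^{(s-1)N}$, and the elementary embedding $F^s_{p,q}\hookrightarrow B^s_{p,\infty}$ then gives $\|\bbE_N\|_{F^s_{p,q}\to F^s_{p,q}}\gtrsim 2^{(s-1)N}$ directly, with no $\varepsilon$-loss and no interpolation. This is a genuine simplification: the paper's interpolation route is consistent with the method used for Proposition~\ref{P4}, but your direct argument is shorter and yields the sharp exponent.
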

\begin{proof}
If $d(1/p-1)<s_0<1$ and $\theta\in(0,1)$, then the real interpolation inequalities give\[
\big\|\SE_N\big\|_{F^{s_0}_{p,q} \to F^{s_0}_{p,q}}^{1-\theta}\,\big\|\SE_N\big\|_{F^s_{p,q} \to F^s_{p,q}}^{\theta}\,
\geq \,c_\theta\, \big\|\SE_N\big\|_{B^{s_{\theta}}_{p,q} \to B^{s_{\theta}}_{p,q}},
\]
with $s_\theta=(1-\theta)s_0+\theta s$. By Proposition \ref{P5} the right hand side is larger than 
a constant times $2^{N(s_\theta-1)}$, while by Corollary \ref{uniformbdcor} we have $\big\|\SE_N\big\|_{F^{s_0}_{p,q} \to F^{s_0}_{p,q}}\approx 1$. 
Choosing $\theta$ sufficiently close to 1 one derives \eqref{CP5_aux}.
\end{proof}

\bigskip

{\it Acknowledgments.}  The authors worked on this paper while participating in the 2016 
summer program in Constructive Approximation and Harmonic Analysis at the Centre de Recerca Matem\`atica at the Universitat Aut\`onoma de Barcelona, Spain. 
 They  would like to thank the organizers of the  program 
for providing a pleasant and fruitful research atmosphere. 
 We also thank the referee for various useful comments that have led to an improved version of this paper. Finally, T.U. thanks Peter Oswald for discussions concerning \cite{triebel78} and the results in \S\ref{optimal}.

\end{document}